\documentclass[11pt]{amsart}

\usepackage{amsmath}
\usepackage{amssymb}
\usepackage{amsthm}
\usepackage{xcolor}
\usepackage{microtype}
\usepackage{tikz}
\usepackage{pgfplots}
\usetikzlibrary{arrows.meta,calc}
\pgfplotsset{compat=1.18}
\usepackage[
  backend=biber,
  style=alphabetic,
  sorting=nyt,
  giveninits=true,
  maxbibnames=99,
  doi=true,
  url=false,
  isbn=false,
  eprint=true
]{biblatex}
\usepackage[colorlinks=true,linkcolor=blue,citecolor=blue,urlcolor=blue]{hyperref}
\hypersetup{
  pdftitle={Nonuniqueness of Model Potentials and Weak Geodesic Rays},
  pdfauthor={Xia Xiao},
  pdfsubject={K\"ahler geometry and pluripotential theory},
  pdfkeywords={model potentials, non-pluripolar Monge--Amp\`ere measures, multiplier ideals, weak geodesic rays, maximal test curves}
}

\definecolor{caplight}{gray}{0.94}
\definecolor{capmid}{gray}{0.62}
\definecolor{capdark}{gray}{0.28}
\definecolor{orbitblue}{RGB}{52,103,164}
\definecolor{massred}{RGB}{190,48,48}
\tikzset{
  capboundary/.style={draw=black,line width=.75pt},
  capregion/.style={fill=caplight,draw=none},
  cappole/.style={fill=black},
  sphereguide/.style={draw=capmid,line width=.28pt},
  vitalidisk/.style={fill=white,draw=capdark,line width=.58pt},
  vitalipole/.style={fill=black},
  vitaliflow/.style={-{Stealth[length=2.1mm,width=1.5mm]},draw=capdark,line width=.72pt},
  rotorbit/.style={draw=orbitblue,dashed,line width=.8pt},
  masspoint/.style={fill=massred,draw=massred},
  masshalo/.style={draw=massred!70,line width=.55pt}
}

\newcommand{\CC}{\mathbb{C}}
\newcommand{\RR}{\mathbb{R}}
\newcommand{\CP}{\mathbb{CP}}
\newcommand{\cI}{\mathcal{I}}
\newcommand{\OO}{\mathcal{O}}
\newcommand{\ddc}{dd^c}
\newcommand{\one}{\mathbf{1}}
\DeclareMathOperator{\PSH}{PSH}

\theoremstyle{plain}
\newtheorem{thm}{Theorem}[section]
\newtheorem{prop}[thm]{Proposition}
\newtheorem{lem}[thm]{Lemma}

\theoremstyle{definition}
\newtheorem{defn}[thm]{Definition}
\newtheorem{setup}[thm]{Setup}

\newtheoremstyle{mainresult}
  {12pt}
  {12pt}
  {\itshape}
  {}
  {\bfseries}
  {.}
  {.5em}
  {}
\theoremstyle{mainresult}
\newtheorem*{thmA}{Theorem A}
\newtheorem*{thmB}{Theorem B}

\theoremstyle{remark}
\newtheorem{rem}[thm]{Remark}

\title{Nonuniqueness of Model Potentials and Weak Geodesic Rays}
\author{Xia Xiao}
\subjclass[2020]{32U15, 32W20, 53C55}
\keywords{model potentials, non-pluripolar Monge--Amp\`ere measures, multiplier ideals, weak geodesic rays, maximal test curves}
\date{}

\begin{document}

\begin{abstract}
We establish two nonuniqueness phenomena on \(\CP^1\). First, distinct
normalized positive-mass model potentials can have the same non-pluripolar
Monge--Amp\`ere measure. By the contact-set formula, their zero-contact sets
agree \(\omega_{\mathrm{FS}}\)-almost everywhere. Moreover, both have zero
Lelong number at every point and identical multiplier ideal sheaves at every
positive scale. Second, we construct distinct maximal test curves whose
zero-contact sets agree pointwise at every parameter. Their inverse Legendre transforms
are distinct bounded weak geodesic rays from zero with the same pointwise right
initial tangent.
\end{abstract}

\maketitle

\section{Introduction}\label{sec:introduction}

Semmes and Donaldson identified the geodesic equation in the space of
K\"ahler potentials with a homogeneous complex Monge--Amp\`ere equation, and
Chen constructed the corresponding weak geodesic segments
\cite{Semmes92,Donaldson99,Chen00}.  Ross--Witt Nystr\"om later used Legendre
duality to construct weak geodesic rays from curves of singularity types and
to relate them to analytic test configurations \cite{RWN14}; see also
\cite{Darvas17} for the pluripotential theory of weak geodesic rays.  On the
singular side, non-pluripolar products and Monge--Amp\`ere equations with
prescribed singularity type were developed in \cite{BEGZ10,DDL18}.  We use the contact-set formula of Di Nezza--Trapani \cite{DNT21}.

These two themes meet in Questions~5.1 and~5.2 of
\cite[Section~5]{Darvas26}. The first asks whether bounded weak geodesic
segments
\[
 [0,a]\ni t\longmapsto u_t,v_t\in\PSH(X,\omega)\cap L^\infty,
 \qquad u_0=v_0=0,
\]
must agree when their initial tangents satisfy
\(\dot u_0=\dot v_0\). The second concerns uniqueness of positive-mass
\emph{model potentials} from their zero-contact data. We use the Monge--Amp\`ere
measure as the primary datum for this question. Di Nezza--Trapani's
contact-set formula
\cite[Corollary~3.4(ii)]{DNT21} gives, for every normalized
\emph{model potential} \(v\),
\[
 \left\langle(\omega+\ddc v)^n\right\rangle
 =
 \one_{\{v=0\}}\omega^n.
\]
Hence for normalized model potentials \(\phi,\psi\) one has
\[
 \left\langle(\omega+\ddc\phi)^n\right\rangle
 =
 \left\langle(\omega+\ddc\psi)^n\right\rangle
 \quad\Longleftrightarrow\quad
 \one_{\{\phi=0\}}=\one_{\{\psi=0\}}
 \quad \omega^n\text{-a.e.}
\]
Our results give negative answers to both Questions~5.1 and~5.2 on
\(\CP^1\). They also separate measure-theoretic and pointwise contact data. Theorem~A
gives nonuniqueness under almost-everywhere equality of the zero-contact sets,
even after fixing all point Lelong numbers and multiplier ideal sheaves.
Proposition~\ref{prop:maximal-test-curves} gives the pointwise counterpart: it
constructs distinct maximal test curves with the same zero-contact set at every
parameter, and for \(\tau\in(s_0,T]\) their corresponding model slices are
distinct. In particular, any such \(\tau\) gives two distinct model potentials
with exactly the same zero-contact set, yielding a negative answer to
Question~5.2 in its literal pointwise formulation. Their inverse Legendre
transforms yield Theorem~B.

Both constructions take place on
\[
 (\CP^1,\omega_{\mathrm{FS}}),
 \qquad
 \int_{\CP^1}\omega_{\mathrm{FS}}=1.
\]
We work on \(\CP^1\) because the required cap potentials are explicit. The
Vitali construction is local; Remark~\ref{rem:local-construction} isolates the
hypotheses under which the measure-theoretic counterexample extends.

\subsection{Model potentials and Monge--Amp\`ere measures}

\begin{thmA}[Model-potential nonuniqueness]
For every \(\rho\in(0,1)\), there exist distinct normalized positive-mass model
potentials
\[
 \phi,\psi\in\PSH(\CP^1,\omega_{\mathrm{FS}})
\]
and a closed Fubini--Study disk \(C\subset\CP^1\) such that
\[
 \left\langle\omega_{\mathrm{FS}}+\ddc\phi\right\rangle
 =
 \left\langle\omega_{\mathrm{FS}}+\ddc\psi\right\rangle
 =
 \one_C\omega_{\mathrm{FS}},
\]
with total mass \(1-\rho>0\). By the contact-set formula
\cite[Corollary~3.4(ii)]{DNT21}, this is equivalent to
\[
 \one_{\{\phi=0\}}
 =
 \one_{\{\psi=0\}}
 =
 \one_C
 \qquad \omega_{\mathrm{FS}}\text{-a.e.}
\]
Moreover,
\[
 \nu(\phi,x)=\nu(\psi,x)=0
 \qquad\text{for every }x\in\CP^1,
\]
and
\[
 \cI(c\phi)=\OO_{\CP^1}=\cI(c\psi)
 \qquad\text{for every }c>0.
\]
\end{thmA}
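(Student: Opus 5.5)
The plan is to build \(\phi\) and \(\psi\) directly from prescribed measures on \(\CP^1\), and then check each property in Theorem~A separately. Fix a pole \(p\) and let \(C\) be the closed Fubini--Study cap centred at the antipode with \(\omega_{\mathrm{FS}}(C)=1-\rho\). Put \(D=\CP^1\setminus C\), an open disk of area \(\rho\). For a probability-normalized measure \(\mu\) of mass \(\rho\) on a compact polar set \(K\Subset D\), define \(\phi_\mu\) by
\[
 \phi_\mu=0 \text{ on } C,\qquad \ddc\phi_\mu=\mu-\omega_{\mathrm{FS}} \text{ on } D,\qquad \phi_\mu=0 \text{ on } \partial D .
\]
Concretely, \(\phi_\mu=G_D[\omega_{\mathrm{FS}}|_D]-G_D[\mu]\) with \(G_D\) the Green potential of \(D\).

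For \(\omega_{\mathrm{FS}}+\ddc\phi_\mu=\one_C\omega_{\mathrm{FS}}+\mu\) to hold globally, no mass may appear on \(\partial D\). This means the normal derivatives must match, which is equivalent to
\[
 \mathrm{Bal}_{\partial D}(\mu)=\mathrm{Bal}_{\partial D}(\omega_{\mathrm{FS}}|_D),
\]
i.e. equality of harmonic sweepings onto the circle \(\partial D\). We also need \(\phi_\mu\le 0\), i.e. \(G_D[\mu]\ge G_D[\omega_{\mathrm{FS}}|_D]\) on \(D\).

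\textbf{Main obstacle.} The key step is to produce two distinct measures \(\mu_1\neq\mu_2\) with these properties, which in addition have no atoms and satisfy \(\mu_i(B(x,r))\to0\) uniformly (in fact with a modulus that makes \(\nu=0\) everywhere). I would attack this by a Vitali-type covering. Exhaust \(D\) up to a null set by countably many disjoint small disks \(B_j\). In each \(B_j\), replace \(\omega_{\mathrm{FS}}|_{B_j}\) by a measure of the same mass supported on a compact polar Cantor set \(K_j\subset B_j\). The Cantor set is chosen with logarithmic capacity zero but with Hausdorff gauge \(h(r)\) such that \(h(r)\to 0\) slowly enough that the local potentials \(\int\log|z-w|\,d\mu(w)\) have zero Lelong number, while \(\sum_j\mu_j\) is still polar.

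Since the swept measure of a measure concentrated in \(B_j\) onto \(\partial D\) differs from that of \(\omega_{\mathrm{FS}}|_{B_j}\) only by an error controlled by the first moments, I would first prescribe the centre of mass (and, by superharmonic comparison, the potential inequality). Then I would correct the remaining error through the infinitely many later disks. Taking two different choices of Cantor sets, e.g.\ \(K_1\) and a rotated copy, gives \(\mu_1\neq\mu_2\), hence \(\phi\neq\psi\). The sign condition \(G_D[\mu]\ge G_D[\omega_{\mathrm{FS}}|_D]\) follows piecewise, since in each \(B_j\) the concentrated measure has a larger Green potential outside \(B_j\) and, after a small-diameter argument, inside as well; this is where the quantitative estimates sit.

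\textbf{Model property.} Let \(u\le0\) be \(\omega_{\mathrm{FS}}\)-subharmonic with \(u\le\phi+O(1)\). On \(C\) we trivially have \(u\le 0=\phi\). On \(D\setminus K\), \(h=u-\phi\) satisfies \(\ddc h=(\omega_{\mathrm{FS}}+\ddc u)\ge0\), so \(h\) is subharmonic there. Moreover \(h\) is bounded above, and \(K\) is closed and polar, so \(h\) extends subharmonically across \(K\). Since \(\limsup h\le 0\) at \(\partial D\), the maximum principle gives \(u\le\phi\). Hence \(P[\phi]=\phi\) (and likewise for \(\psi\)), and both are normalized with \(\sup=0\).

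\textbf{Remaining properties.} The non-pluripolar part of \(\one_C\omega_{\mathrm{FS}}+\mu\) is \(\one_C\omega_{\mathrm{FS}}\), since \(\mu\) lives on a polar set. This gives the common measure, of total mass \(1-\rho\). The equivalent a.e.\ contact-set statement is then \cite[Corollary~3.4(ii)]{DNT21}. Lelong numbers equal \(\mu(\{x\})=0\) in dimension one, because \(\nu(\phi,x)\) is the atom of \(\omega_{\mathrm{FS}}+\ddc\phi\) at \(x\). Finally, Skoda's integrability theorem (zero Lelong number implies \(e^{-c\phi}\in L^1_{\mathrm{loc}}\) for all \(c>0\)) yields \(\cI(c\phi)=\OO_{\CP^1}=\cI(c\psi)\).
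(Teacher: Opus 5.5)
\textbf{There is a genuine gap, and it sits exactly in the step you call the main obstacle.}

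First, the framework as you set it up cannot produce an atomless measure. Rotate so that the centre \(p\) of \(D\) is \(0\) in an affine chart and \(D=\{|w|<R\}\). Your no-boundary-mass condition says the sweepings of \(\mu\) and \(\omega_{\mathrm{FS}}|_D\) onto \(\partial D\) agree. Via the Poisson representation this gives \(\int h\,d\mu=\int_D h\,\omega_{\mathrm{FS}}\) for every \(h\) harmonic near \(\overline D\). Since \(\omega_{\mathrm{FS}}|_D\) is rotation-invariant about \(0\), the right side equals \(\rho h(0)\) by the mean-value property.

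Now take \(h(w)=\log|z-w|\) with \(|z|>R\). This shows that the logarithmic potential \(U_\mu(z)\) equals \(\rho\log|z|\) there. Both functions are harmonic on \(\CC\setminus(K\cup\{0\})\), which is connected because \(K\cup\{0\}\) is closed and polar. Hence they agree off a null set, and \(\mu=\rho\delta_0\). So a measure on a compact polar \(K\Subset D\) satisfying your balayage condition is forced to be the cap's own atom, with Lelong number \(\rho\) at \(p\).

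The same argument inside each \(B_j\) shows that no diffuse polar measure compactly supported in \(B_j\) can replace \(\omega_{\mathrm{FS}}|_{B_j}\) without changing the potential outside \(B_j\). So in your Vitali scheme every block necessarily carries an error. The entire burden then falls on ``correct the remaining error through the infinitely many later disks.'' You give no mechanism for this: nothing guarantees that infinitely many harmonic-moment conditions hold exactly with a positive measure, and nothing establishes the sign condition \(\phi_\mu\le 0\).

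A further problem arises once the support accumulates at \(\partial D\) and on the residual set of the packing. Its closure then need not be polar, so your removable-singularity proof of \(P_{\omega_{\mathrm{FS}}}[\phi]=\phi\) no longer applies.

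\textbf{The missing idea is the paper's two-step exactness mechanism.}

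\emph{Step 1: exact atoms at disk centres.} The rotated cap blocks \(\varphi_j=b_{m_j}\circ G_j^{-1}\) replace \(\omega_{\mathrm{FS}}|_{B_j}\) by \(m_j\delta_{p_j}\) exactly; this is precisely the mean-value property you were trying to approximate. Consequently \(u=\sum_j\varphi_j\) satisfies \(\omega_{\mathrm{FS}}+\ddc u=\one_K\omega_{\mathrm{FS}}+\sum_j m_j\delta_{p_j}\) with no error.

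\emph{Step 2: diffuse the atoms globally, not block by block.} Average the whole potential,
\[
\Phi_\sigma=\int u\circ R_{-\theta}\,d\sigma(\theta),
\]
where \(\sigma\) is an atomless measure on a polar Cantor set of rotations about the centre of \(D\). Each rotated packing is again exact, so the average is exact. The residual measure becomes atomless. Because \(\sigma\) has logarithmic potential \(-\infty\) on its support, the residual measure is carried by \(\{\Phi_\sigma=-\infty\}\).

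Modelity then follows from \cite[Corollary~3.4(iv)]{DNT21}, which does not require the polar locus to be closed. Your final checks are fine once such a \(\mu\) is in hand: the non-pluripolar part, Lelong numbers read off as atoms, and Skoda's theorem for the multiplier ideals, which is a simpler substitute for the appeal to \cite{DX24}.
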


Theorem~A is proved in Section~\ref{sec:diffuse} as
Theorem~\ref{thm:principal-b}.

The proof has two steps.  A Vitali packing redistributes the polar mass of the
cap model (Figure~\ref{fig:vitali-packing}) without changing its
non-pluripolar Monge--Amp\`ere measure.  Polar averaging then makes the
residual curvature atomless while keeping it on the minus-infinity locus
(Figure~\ref{fig:rotation-diffusion}).  On a smooth curve every smooth
modification is an isomorphism, so \cite[Proposition~2.2(ii)]{DX24} identifies
the resulting multiplier-ideal singularity type with that of the zero
potential.

\subsection{Weak geodesic rays and initial tangents}

\begin{thmB}[Weak geodesic rays with the same initial tangent]
There exist distinct bounded weak geodesic rays \((u_t)_{t\geq0}\) and
\((v_t)_{t\geq0}\) on \((\CP^1,\omega_{\mathrm{FS}})\) such that
\[
 u_0=v_0=0,
 \qquad
 \dot u_0(x)=\dot v_0(x)
 \quad\text{for every }x\in\CP^1.
\]
Every finite-time slice is bounded, and the complexifications of both rays
solve the homogeneous complex Monge--Amp\`ere equation.
\end{thmB}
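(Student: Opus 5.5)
Theorem~B will follow from Proposition~\ref{prop:maximal-test-curves} through the Ross--Witt Nystr\"om Legendre correspondence \cite{RWN14} (in the form of \cite{Darvas17}). That proposition supplies two distinct maximal test curves \(\tau\mapsto\phi_\tau\) and \(\tau\mapsto\psi_\tau\), \(\tau\in(-\infty,T]\). They are concave and decreasing in \(\tau\), equal to \(0\) for \(\tau\le 0\), model at each parameter, and \(\equiv-\infty\) for \(\tau>T\). Their zero-contact sets agree pointwise: \(\{\phi_\tau=0\}=\{\psi_\tau=0\}\) for every \(\tau\).

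We define the rays by the inverse Legendre transform
\[
 u_t=\sup_{\tau\le T}\bigl(\phi_\tau+t\tau\bigr),\qquad
 v_t=\sup_{\tau\le T}\bigl(\psi_\tau+t\tau\bigr),
\]
taking upper semicontinuous regularizations if needed.

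\textbf{Basic properties.} The duality theorem for maximal bounded test curves \cite{RWN14,Darvas17} says that \(t\mapsto u_t\) is a bounded weak geodesic ray emanating from \(0\) whose complexification solves the homogeneous Monge--Amp\`ere equation, and similarly for \(v_t\). Boundedness of each slice is elementary: \(0\le u_t\le tT\), using \(\phi_0=0\) and \(\phi_\tau\le 0\). Setting \(t=0\) gives \(u_0=\sup_\tau\phi_\tau=0\).

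\textbf{Distinctness.} The Legendre transform is injective on maximal test curves, since \(\phi_\tau=\inf_{t\ge0}(u_t-t\tau)\). Because \(\phi\ne\psi\) as test curves, this forces \(u\ne v\).

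\textbf{Initial tangent.} This is the main step. Since \(t\mapsto u_t(x)\) is convex with \(u_0=0\), the right derivative \(\dot u_0(x)=\lim_{t\downarrow0}u_t(x)/t\) exists in \([0,T]\).

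I would show the pointwise formula
\[
 \dot u_0(x)=\sup\{\tau\le T:\ \phi_\tau(x)=0\}.
\]
\emph{Lower bound.} If \(\phi_\tau(x)=0\), then \(u_t(x)\ge t\tau\), so \(\dot u_0(x)\ge\tau\).

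\emph{Upper bound.} Suppose \(\tau>\sup\{\sigma:\phi_\sigma(x)=0\}\). Then \(\phi_\tau(x)<0\). I use concavity of \(\sigma\mapsto\phi_\sigma(x)\) together with \(\phi_0(x)=0\). For \(\sigma\ge\tau\), this gives \(\phi_\sigma(x)\le(\sigma/\tau)\phi_\tau(x)\), a decrease at a definite linear rate \(-\delta\sigma\). For \(\sigma\in[0,\tau]\), we simply have \(\phi_\sigma(x)+t\sigma\le t\tau\). Combining the two ranges,
\[
 u_t(x)\le\max\Bigl(t\tau,\ \sup_{\sigma\ge\tau}\bigl(t\sigma-\delta\sigma\bigr)\Bigr)=t\tau
 \quad\text{for }t<\delta .
\]
Hence \(\dot u_0(x)\le\tau\), and letting \(\tau\) decrease to the supremum gives the formula.

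The one delicate point is at \(\tau=T\), where \(\phi_\sigma=-\infty\) for \(\sigma>T\). That case only helps the upper bound.

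\textbf{Conclusion.} The formula expresses \(\dot u_0(x)\) purely through the family of zero-contact sets \(\{\phi_\tau=0\}\), and likewise for \(v\). Since these sets agree pointwise for every \(\tau\), we get \(\dot u_0(x)=\dot v_0(x)\) for every \(x\in\CP^1\), which completes Theorem~B.

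The main obstacle I expect is the upper bound above. It requires using the regularization and the upper semicontinuity of \(\tau\mapsto\phi_\tau(x)\) carefully at points where \(\phi_\tau(x)=-\infty\) or where the contact set jumps. If concavity in \(\tau\) is not directly available pointwise, I would first pass to the usc-regularized curve and check that its contact sets are unchanged; this is where the specific structure of the curves in Proposition~\ref{prop:maximal-test-curves} would be used.
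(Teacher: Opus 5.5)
Your proposal is correct, and its architecture matches the paper's. You Legendre-transform the two curves of Proposition~\ref{prop:maximal-test-curves}, take the geodesic and homogeneous Monge--Amp\`ere properties from the Ross--Witt Nystr\"om/Darvas duality, read off $0\le u_t\le tT$ directly, and get distinctness from injectivity of the transform.

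The difference is in the tangent step. The paper invokes the identity $\{\dot u_0\ge\tau\}=\{\widehat u_\tau=0\}$ of Proposition~\ref{prop:initial-contact}, quoted from Darvas's survey. It identifies $\widehat u_\tau$ with the test curve via the bijection and concludes from equality of all superlevel sets. You instead prove $\dot u_0(x)=\sup\{\tau:\phi_\tau(x)=0\}$ by hand, using convexity of $t\mapsto u_t(x)$ and concavity of $\tau\mapsto\phi_\tau(x)$ anchored at $\phi_0(x)=0$. Since $\{\tau:\phi_\tau(x)=0\}$ is a closed half-line (the curve is decreasing and upper semicontinuous in $\tau$), this is exactly the cited identity. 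Your version is self-contained, makes existence of the pointwise right derivative explicit, and does not need the inversion $\widehat u_\tau=\phi_\tau$ for the tangent. The paper's version is shorter and rests on a statement valid for every bounded ray.

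The regularization worry you flag closes quickly, because a test curve is jointly upper semicontinuous in $(\tau,x)$. Let $(\tau_k,x_k)\to(\tau,x)$ and fix $\eta>0$.
\begin{itemize}
\item Monotonicity gives $\phi_{\tau_k}(x_k)\le\phi_{\tau-\eta}(x_k)$ for large $k$, so upper semicontinuity in $x$ gives $\limsup_k\phi_{\tau_k}(x_k)\le\phi_{\tau-\eta}(x)$.
\item A decreasing upper semicontinuous function is left-continuous, so $\phi_{\tau-\eta}(x)\to\phi_\tau(x)$ as $\eta\downarrow0$.
\end{itemize}
For $t\ge0$ only $\tau\in[0,T]$ contributes to the supremum. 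So $u_t$ is a maximum of a jointly usc function over a compact parameter set, hence already usc. No regularization occurs, and your pointwise computation applies to the ray itself.
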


Theorem~B is proved in Section~\ref{sec:rays} as
Theorem~\ref{thm:principal-c}. Its key input is
Proposition~\ref{prop:maximal-test-curves}, which gives two distinct maximal
test curves \((\psi_\tau)\) and \((\theta_\tau)\) satisfying
\[
 \{\psi_\tau=0\}=\{\theta_\tau=0\}
 \qquad\text{for every }\tau\in\RR
\]
as subsets of \(\CP^1\). The identity
\[
 \{\dot u_0\geq\tau\}=\{\widehat u_\tau=0\}
\]
then turns this exact contact-set equality into pointwise equality of the
initial tangents. Since the two rays are distinct, restricting them to a finite
interval on which they differ gives distinct bounded weak geodesic segments
with the same initial tangent, and hence a negative answer to Question~5.1.

\subsection*{Structure of the paper}

Section~\ref{sec:preliminaries} recalls model envelopes, Legendre duality, and
the explicit cap model. Section~\ref{sec:atomic} gives the Vitali
redistribution, and Section~\ref{sec:diffuse} proves
Theorem~\ref{thm:principal-b} by polar averaging. Section~\ref{sec:test-curves}
constructs the two maximal test curves used in the ray argument and applies
Legendre duality to prove Theorem~\ref{thm:principal-c}.

\section{Preliminaries}\label{sec:preliminaries}

\subsection{Model envelopes}

\begin{defn}[Model envelopes]\label{defn:model-envelope}
Let \((X,\omega)\) be a compact K\"ahler manifold. For an upper-semicontinuous
function \(f\), set
\[
 P_\omega(f)
 :=
 \left(
 \sup\{v\in\PSH(X,\omega)\mid v\leq f\}
 \right)^*.
\]
For \(\phi\in\PSH(X,\omega)\) with \(\sup_X\phi=0\), define
\[
 P_\omega[\phi]
 :=
 \left(
 \lim_{C\to\infty}P_\omega\bigl(\min\{\phi+C,0\}\bigr)
 \right)^*.
\]
We call \(\phi\) an \emph{\(\omega\)-model potential} if
\(P_\omega[\phi]=\phi\). It has positive mass if its non-pluripolar
Monge--Amp\`ere measure has strictly positive total mass.
\end{defn}

In complex dimension one,
\[
 \left\langle\omega+\ddc u\right\rangle
 =
 \one_{\{u>-\infty\}}(\omega+\ddc u),
\]
so curvature carried by the minus-infinity locus is discarded. With our
normalization, the Lelong number \(\nu(u,x)\) is the mass of the atom of
\(\omega+\ddc u\) at \(x\).

\subsection{Maximal test curves and Legendre duality}

We use the zero-based bounded-ray normalization of
\cite[Theorem~4.1 and Section~5]{Darvas26}, following the primary constructions
\cite{RWN14,Darvas17}.

\begin{defn}[Test curves and maximality]\label{defn:maximal-test-curve}
A test curve is a family
\[
 (\psi_\tau)_{\tau\in\RR},
 \qquad
 \psi_\tau\in\PSH(X,\omega)\cup\{-\infty\},
\]
such that, for every \(x\in X\), the function
\(\tau\mapsto\psi_\tau(x)\) is upper semicontinuous, concave, and decreasing,
while \(\psi_\tau=0\) for all sufficiently negative \(\tau\) and
\(\psi_\tau\equiv-\infty\) for all sufficiently positive \(\tau\).
The test curve is maximal if every finite slice is a normalized model
potential:
\[
 P_\omega[\psi_\tau]=\psi_\tau
 \qquad\text{whenever }\psi_\tau\not\equiv-\infty.
\]
\end{defn}

\begin{thm}[Legendre correspondence]
\label{thm:darvas-correspondence}
In the formulation of \cite[Theorem~4.1]{Darvas26}, building on
\cite{RWN14,Darvas17}, inverse Legendre transform is a bijection from maximal
test curves of
Definition~\ref{defn:maximal-test-curve} to bounded weak geodesic rays
emanating from zero. Explicitly,
\[
 u_t(x)
 =
 \sup_{\tau\in\RR}\bigl(\psi_\tau(x)+t\tau\bigr),
 \qquad t>0,
\]
followed by the canonical upper-semicontinuous representative. The inverse
correspondence is
\[
 \widehat u_\tau(x)
 =
 \inf_{t>0}\bigl(u_t(x)-t\tau\bigr).
\]
\end{thm}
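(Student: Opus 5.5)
Since this statement is quoted from \cite[Theorem~4.1]{Darvas26}, the plan is to reconstruct the argument of \cite{RWN14,Darvas17} in the zero-based normalization, splitting it into four steps: the forward map, the inverse map, the fact that the two maps are mutually inverse, and the identification of maximality with the geodesic property.

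\emph{Forward direction.} Let \((\psi_\tau)\) be a maximal test curve and fix \(\tau_-<\tau_+\) with \(\psi_\tau=0\) for \(\tau\le\tau_-\) and \(\psi_\tau\equiv-\infty\) for \(\tau>\tau_+\). Then the supremum defining \(u_t\) ranges only over \(\tau\in[\tau_-,\tau_+]\), and
\[
 t\tau_-\le u_t\le t\tau_+ .
\]
This gives boundedness, and \(u_t\to0\) uniformly as \(t\to0^+\). Next, complexify by setting \(U(x,z)=u_{\log|z|^{-1}}(x)\) on a punctured disk. Each function \((x,z)\mapsto\psi_\tau(x)+\tau\log|z|^{-1}\) is \(\pi^*\omega\)-psh, so after the usc regularization \(U\) is a bounded \(S^1\)-invariant subgeodesic. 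It remains to show that \((\pi^*\omega+\ddc U)^2=0\).

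\emph{Inverse direction.} Let \((u_t)\) be a bounded weak geodesic ray with \(u_0=0\). Then \(t\mapsto u_t(x)\) is convex, and \(u_t\le t\sup\dot u_0\). For each fixed \(\tau\), Kiselman's minimum principle applied to the \(S^1\)-invariant function \(U(x,z)-\tau\log|z|^{-1}\) shows that \(\widehat u_\tau\) is \(\omega\)-psh or \(\equiv-\infty\). In \(\tau\), \(\widehat u_\tau\) is an infimum of affine decreasing functions, hence concave and decreasing. Using \(u_t\ge -Ct\) one gets \(\widehat u_\tau=0\) for \(\tau\le -C\), and \(\widehat u_\tau\equiv-\infty\) beyond the slope bound. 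Upper semicontinuity in \(\tau\) follows after taking usc representatives.

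\emph{Bijectivity.} For fixed \(x\), the function \(t\mapsto u_t(x)\) is convex and lsc on \([0,\infty)\), and \(\tau\mapsto\psi_\tau(x)\) is concave and usc. Fenchel--Moreau duality then shows that the two transforms are mutually inverse pointwise, up to the negligible sets where the regularizations act. These sets are pluripolar, hence handled by the usc conventions.

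\emph{Main obstacle: maximality versus geodesy.} This step is where the real content lies, and I would handle it in two directions.

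First, if \(U\) is geodesic, I would show \(P_\omega[\widehat u_\tau]=\widehat u_\tau\). Build the ray associated to \(P_\omega[\widehat u_\tau]\ge \widehat u_\tau\). It is a subgeodesic with the same boundary data at \(t=0\) and the same growth, so by the maximality of the geodesic (comparison principle on bounded subgeodesics with boundary value \(0\) and slope bounded by \(\tau\)) it lies below \(u_t\). Applying the inverse transform then forces equality.

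Second, if the test curve is maximal, I would show that \(U\) solves the homogeneous equation. Compare \(U\) with the Perron envelope \(\tilde U\) of subgeodesics with the same Legendre data. We have \(\tilde U\ge U\), and \(\tilde U\) is geodesic, so its transform \(\tilde\psi_\tau\ge\psi_\tau\) has the same singularity type, being controlled by \(\min(\psi_\tau+C,0)\). Maximality, \(P_\omega[\psi_\tau]=\psi_\tau\), then gives \(\tilde\psi_\tau\le\psi_\tau\), hence equality. The delicate point is the singularity-type comparison \(\tilde\psi_\tau\preceq\psi_\tau\). I would try to derive it from the domination principle for model potentials \cite{DDL18}, using that \(\tilde\psi_\tau\) is bounded above by \(0\) and equals \(0\) on the zero-contact set of \(\psi_\tau\).
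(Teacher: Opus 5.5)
The paper does not prove this statement. It is quoted from \cite[Theorem~4.1]{Darvas26}, building on \cite{RWN14,Darvas17}, so your reconstruction has to stand on its own. Several parts are fine: boundedness and the subgeodesic property of the forward transform, Kiselman's minimum principle for $\widehat u_\tau$, concavity and monotonicity in $\tau$, and pointwise Fenchel--Moreau duality (quasi-psh functions agreeing off a pluripolar set coincide).

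The genuine gap is the implication ``maximal $\Rightarrow$ geodesic'', exactly where you call it delicate. Granting $\tilde\psi_\tau\preceq\psi_\tau$, your conclusion via $P_\omega[\psi_\tau]=\psi_\tau$ and bijectivity is correct. But you only assert that inequality (``being controlled by $\min(\psi_\tau+C,0)$''), and the route you propose cannot prove it. First, the domination principle of \cite{DDL18} needs a singularity hypothesis on the competitor, namely membership in the relative full-mass class of $\psi_\tau$, or at least $\tilde\psi_\tau\preceq\psi_\tau$. That is the very thing you want to conclude. Second, the information you intend to use carries no information about singularity type. This holds even if you add $\tilde\psi_\tau\ge\psi_\tau$ and the model property of $\tilde\psi_\tau$ to the two facts $\tilde\psi_\tau\le0$ and $\tilde\psi_\tau=0$ on $\{\psi_\tau=0\}$.

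Here is a counterexample from the paper. For $0<\rho'<\rho$, the cap model $b_{\rho'}$ has all of these properties relative to $b_\rho$. Also $\langle\omega_{\mathrm{FS}}+\ddc b_\rho\rangle$ gives zero mass to $\{b_\rho<b_{\rho'}\}\subset U_\rho$. Yet $\nu(b_{\rho'},\infty)=\rho'<\rho=\nu(b_\rho,\infty)$, so $b_{\rho'}\not\preceq b_\rho$. The slices $q_\tau$ and $b_{\tau^2}$ of Proposition~\ref{prop:maximal-test-curves} make the same point: identical contact sets, incomparable singularities. The paper's own theme is that zero-contact data do not determine model potentials, so no argument fed only by these data can close this step.

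What is missing is an input that uses the equation satisfied by $\tilde U$ to control the mass or singularity type of $\tilde\psi_\tau$. Your ``Perron envelope with the same Legendre data'' is also problematic. If the constraint is $\widehat V_\tau\le\psi_\tau$, the envelope is $U$ itself and nothing is gained. If the constraint holds only up to constants, the uniform constant you need is lost in the supremum.

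One workable route runs as follows.
\begin{enumerate}
\item Define $\tilde U$ as the increasing limit of the geodesic segments joining $0$ to $u_l$ on $[0,l]$. This gives $U\le\tilde U\le t\tau_+$ and the geodesic property.
\item The Monge--Amp\`ere energy is affine along these segments, so $U$ and $\tilde U$ have the same asymptotic energy slope.
\item Express that slope through the masses $\int_X\langle(\omega+\ddc\psi_\tau)^n\rangle$, as in Ross--Witt Nystr\"om. With $\tilde\psi_\tau\ge\psi_\tau$ and monotonicity of masses, this forces equal masses for a.e.\ $\tau$.
\item Ordered model potentials with equal positive mass coincide \cite{DDL18}. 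Concavity and upper semicontinuity in $\tau$ then handle the remaining parameters.
\end{enumerate}

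Two smaller points. In ``geodesic $\Rightarrow$ maximal'', the competitor $P_\omega[\widehat u_\tau]+t\tau$ is unbounded and uncontrolled at a finite endpoint $t=l$. Compare instead $P_\omega(\min\{\widehat u_\tau+C,0\})+t(\tau-C/l)$ with $u_t$ on $[0,l]$, then let $l\to\infty$ and afterwards $C\to\infty$. Also, $u_t\le t\sup\dot u_0$ needs its own argument, since convexity gives the opposite inequality $u_t\ge t\dot u_0$.
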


\begin{prop}[Initial-tangent contact sets]
\label{prop:initial-contact}
Let \((u_t)_{t\geq0}\) be a bounded weak geodesic ray emanating from zero and
let \((\widehat u_\tau)_{\tau\in\RR}\) be its Legendre transform.  As
observed in \cite[Section~5]{Darvas26}, for
every \(\tau\in\RR\),
\[
 \{\dot u_0\geq\tau\}
 =
 \{\widehat u_\tau=0\}.
 \tag{2.1}\label{eq:prelim-tangent-contact}
\]
The pointwise initial tangent therefore records the zero-contact filtration
of the dual maximal test curve.
\end{prop}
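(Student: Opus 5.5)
We need to prove $\{\dot u_0\ge\tau\}=\{\widehat u_\tau=0\}$, where $\dot u_0(x)=\lim_{t\to0^+}u_t(x)/t$ is the pointwise right derivative at $t=0$. The plan is to work pointwise in $x$, using only convexity of $t\mapsto u_t(x)$ together with $u_0=0$, and then read off the definition of the inverse Legendre transform in Theorem~\ref{thm:darvas-correspondence}.

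\emph{Step 1 (the difference quotient is monotone).} For a weak geodesic ray, $t\mapsto u_t(x)$ is convex for every fixed $x$. This follows from subharmonicity of the complexified potential in the $\log|z|$ direction, using its $S^1$-invariance. Since $u_0(x)=0$, convexity makes $t\mapsto u_t(x)/t$ nondecreasing on $(0,\infty)$. Hence the right derivative
\[
\dot u_0(x)=\lim_{t\to0^+}\frac{u_t(x)}{t}=\inf_{t>0}\frac{u_t(x)}{t}
\]
exists in $[-\infty,\infty)$. Boundedness of $u_t$ makes it finite from above, though a value of $-\infty$ causes no problem below.

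\emph{Step 2 (rewrite the condition).} We have
\[
\dot u_0(x)\ge\tau \iff \frac{u_t(x)}{t}\ge\tau \ \ \forall t>0 \iff u_t(x)-t\tau\ge0\ \ \forall t>0 \iff \widehat u_\tau(x)\ge0.
\]
The first equivalence holds because $\dot u_0(x)$ is the infimum of the difference quotients.

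\emph{Step 3 (the transform is never positive).} Since $u_t(x)-t\tau\to u_0(x)=0$ as $t\to0^+$, we get $\widehat u_\tau(x)\le 0$. This uses pointwise right-continuity of $t\mapsto u_t(x)$ at $0$, which follows from convexity together with $u_0=0$ and local boundedness. Combining this with Step~2 gives $\widehat u_\tau(x)\ge0\iff\widehat u_\tau(x)=0$, which is the identity \eqref{eq:prelim-tangent-contact}.

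\emph{Main obstacle.} The delicate point is making sure the $\widehat u_\tau$ in the statement is exactly the pointwise infimum, with no usc regularization. Upper-semicontinuous regularization could enlarge the zero set. I would argue that the infimum over $t$ of the functions $u_t-t\tau$, each $\omega$-psh and usc, is already usc and either $\omega$-psh or $\equiv-\infty$, so no regularization occurs; Theorem~\ref{thm:darvas-correspondence} states the inverse transform in exactly this form. A second point to check is continuity at $t=0$, including the case where $u_t$ is defined only via its usc representative. Convexity on $[0,\infty)$ with $u_0=0$ finite gives $\limsup_{t\to0}u_t(x)\le0$. Combined with the monotone limit from Step~1, which gives $u_t(x)\ge t\,\dot u_0(x)$, and the fact that $\dot u_0(x)$ is finite at points where the condition $\dot u_0(x)\ge\tau$ is in question, this yields the limit $0$ wherever it is needed.
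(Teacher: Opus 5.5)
Your argument is correct, and it takes a different route from the paper only in the sense that the paper gives no proof of its own. It records the identity as an observation from \cite[Section~5]{Darvas26} and uses it as a black box in Theorem~\ref{thm:principal-c}. You instead write out the elementary Legendre-duality computation behind that observation. This is self-contained and shows that only two facts are used: pointwise convexity of $t\mapsto u_t(x)$, and the boundary behaviour $\limsup_{t\to0^+}u_t(x)\le u_0(x)=0$. Maximality of the dual test curve and the homogeneous Monge--Amp\`ere equation play no role.

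Two side remarks need tightening.
\begin{itemize}
\item Your regularization worry is settled more simply than you suggest. An infimum of upper semicontinuous functions is already upper semicontinuous, so no usc regularization can change $\{\widehat u_\tau=0\}$. Your extra claim that this infimum is $\omega$-psh would need Kiselman's minimum principle applied to the $S^1$-invariant complexification; psh-ness of each $u_t$ alone does not suffice. It is not needed for the set identity.
\item Convexity on $[0,\infty)$ with $u_0=0$ and local boundedness does not give right-continuity at $t=0$. The function equal to $0$ at $t=0$ and $-1$ for $t>0$ is bounded and convex. Step~3, however, only needs $\widehat u_\tau(x)\le\liminf_{t\to0^+}u_t(x)\le\limsup_{t\to0^+}u_t(x)\le0$, which convexity does give, so your conclusion stands. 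Alternatively, the standard bound $|u_t|\le (t/a)\sup_X|u_a|$ for bounded weak geodesics from $0$ gives $u_t(x)\to0$ and finiteness of $\dot u_0(x)$ at every point. In the paper's application this is simply \eqref{eq:ray-bounds}.
\end{itemize}
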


\subsection{The Fubini--Study cap model}

\begin{setup}[Fubini--Study normalization]\label{setup:sphere}
On the affine chart \(\CC\subset\CP^1\), with coordinate \(w\), let
\[
 \omega_{\mathrm{FS}}
 =
 \ddc\left(\frac12\log(1+|w|^2)\right),
 \qquad
 \int_{\CP^1}\omega_{\mathrm{FS}}=1.
\]
We normalize \(\ddc\) so that
\[
 \ddc\log|w-p|=\delta_p.
\]
\end{setup}

\begin{prop}[One-cap model]\label{prop:cap-models}
For \(0<\rho<1\), define
\[
 b_\rho(w)=
 \begin{cases}
  0,
  & |w|^2\leq (1-\rho)/\rho,\\[4pt]
  \dfrac12
  \left(
   \begin{aligned}
    &-\log(1+|w|^2)
     +(1-\rho)\log|w|^2\\
    &-\rho\log\rho
     -(1-\rho)\log(1-\rho)
   \end{aligned}
  \right),
  & |w|^2\geq (1-\rho)/\rho.
 \end{cases}
\]
Then \(b_\rho\) extends to a normalized positive-mass
\(\omega_{\mathrm{FS}}\)-model potential. Its unique pole is \(\infty\), its
contact region (equivalently, zero-contact set) is
\[
 C_\rho
 =
 \left\{|w|^2\leq\frac{1-\rho}{\rho}\right\},
\]
and
\[
 \omega_{\mathrm{FS}}+\ddc b_\rho
 =
 \one_{C_\rho}\omega_{\mathrm{FS}}+\rho\delta_\infty.
 \tag{2.2}\label{eq:cap-current}
\]
Consequently,
\[
 \left\langle\omega_{\mathrm{FS}}+\ddc b_\rho\right\rangle
 =
 \one_{C_\rho}\omega_{\mathrm{FS}},
 \qquad
 \int_{\CP^1}
 \left\langle\omega_{\mathrm{FS}}+\ddc b_\rho\right\rangle
 =
 1-\rho>0.
\]
\end{prop}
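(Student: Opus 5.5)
The plan has four steps: check the function directly, identify its current via its radial profile, show the model property with a maximum-principle argument, and read off the remaining claims.

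\emph{Step 1: local form and extension.} I would verify everything in the logarithmic variable \(s=\log|w|^2\). Write
\[
b_\rho=\tfrac12\bigl(-\log(1+|w|^2)+(1-\rho)\log|w|^2+c_\rho\bigr)
\]
on the outer region, where \(c_\rho=-\rho\log\rho-(1-\rho)\log(1-\rho)\). Then
\[
\omega_{\mathrm{FS}}+\ddc b_\rho=\ddc\bigl(\tfrac12(1-\rho)\log|w|^2\bigr)=0
\]
on \(\{|w|^2>(1-\rho)/\rho\}\), away from \(w=0\), which is not in this region. Near \(\infty\), set \(z=1/w\). The potential \(\tfrac12\log(1+|w|^2)+b_\rho\) becomes \(\tfrac12(1-\rho)\log|w|^2+\text{const}=-(1-\rho)\log|z|+\text{const}\). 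Adding the local potential \(\tfrac12\log(1+|z|^2)-\log|z|\) of \(\omega_{\mathrm{FS}}\) in the \(z\)-chart shows that \(b_\rho\) behaves like \(\rho\log|z|\) plus a smooth function there. This gives an atom \(\rho\delta_\infty\), i.e.\ Lelong number \(\rho\) at \(\infty\).

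\emph{Step 2: the current (2.2).} Put \(g(s)=\tfrac12\log(1+e^s)\), which is convex in \(s\). On the outer region, \(g+b_\rho=\tfrac12(1-\rho)s+\tfrac12c_\rho\) is the affine function in \(s\) tangent to \(g\) at \(s_0=\log\frac{1-\rho}{\rho}\). Indeed \(g'(s_0)=\tfrac12\frac{e^{s_0}}{1+e^{s_0}}=\tfrac12(1-\rho)\), and the constant matches because \(\log(1+e^{s_0})=-\log\rho\). So the full local potential \(\max\)-glues \(g\) on \(s\le s_0\) with its tangent line for \(s\ge s_0\). This function is convex and \(C^1\), so \(b_\rho\le0\) and \(b_\rho\) is \(\omega_{\mathrm{FS}}\)-psh. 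Since the glued function is \(C^1\), the gluing circle carries no mass, and \(\omega_{\mathrm{FS}}+\ddc b_\rho=\one_{C_\rho}\omega_{\mathrm{FS}}\) off \(\infty\). Adding the atom from Step 1 gives (2.2). As a consistency check, the total mass is \(1\) because \(\omega_{\mathrm{FS}}(C_\rho)=1-\rho\): the mass of \(\{|w|^2\le R\}\) is \(R/(1+R)\). The non-pluripolar part drops the atom at the pole \(\{b_\rho=-\infty\}=\{\infty\}\). The zero set is exactly \(C_\rho\), since \(b_\rho<0\) strictly outside by strict convexity of \(g\).

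\emph{Step 3: the model property (main obstacle).} Here I must show \(P[b_\rho]=b_\rho\). The inequality \(P[b_\rho]\ge b_\rho\) is automatic, because \(b_\rho\le0\) and \(b_\rho\le\min(b_\rho+C,0)\). For the reverse, let \(v\le0\) be \(\omega\)-psh with \(v\le b_\rho+C\). Then \(v\) has Lelong number at least \(\rho\) at \(\infty\). On the outer disk \(D=\{|w|^2>(1-\rho)/\rho\}\cup\{\infty\}\), the function \(b_\rho\) is \(\omega\)-harmonic off \(\infty\) and has exactly the singularity \(\rho\log|z|\). Hence \(v-b_\rho\) is bounded above near \(\infty\) and subharmonic on \(D\setminus\{\infty\}\), so it extends subharmonically across \(\infty\). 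On \(\partial D\) we have \(v-b_\rho\le 0\), since \(v\le0=b_\rho\) there. The maximum principle then gives \(v\le b_\rho\) on \(D\), and on \(C_\rho\) the bound \(v\le0=b_\rho\) is immediate. Taking the supremum, then the increasing limit in \(C\), then usc regularization preserves \(\le b_\rho\), because \(b_\rho\) is usc. The delicate points are the removable-singularity step and checking that the boundary values are attained in the usc sense; I would handle the latter by applying the maximum principle on slightly smaller disks.

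\emph{Step 4: remaining claims.} Normalization, \(\sup b_\rho=0\), holds because \(b_\rho=0\) on \(C_\rho\). Positive mass \(1-\rho\) follows from Step 2, and the pole is unique by the explicit formula.
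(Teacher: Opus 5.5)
Your proposal is correct. Steps 1, 2 and 4 are essentially the paper's own computation: \(C^1\) matching in the log-radial variable, \(\omega_{\mathrm{FS}}\)-harmonicity of the exterior piece, the \(\rho\log|\zeta|\) pole at \(\infty\), and the area count. Your tangent-line picture also gives a cleaner reason for the paper's bare assertion that \(b_\rho\le0\) with equality exactly on \(C_\rho\).

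The genuine difference is Step 3. The paper never compares \(b_\rho\) with competitors. It notes that \(\one_{C_\rho}\omega_{\mathrm{FS}}\) has positive mass and is carried by \(\{b_\rho=0\}\). Since \(b_\rho\le P_{\omega_{\mathrm{FS}}}[b_\rho]\le0\), this measure charges nothing on \(\{b_\rho<P_{\omega_{\mathrm{FS}}}[b_\rho]\}\), and the paper then invokes \cite[Corollary~3.4(iv)]{DNT21}.

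You instead prove directly that every \(\omega_{\mathrm{FS}}\)-psh \(v\le\min\{b_\rho+C,0\}\) satisfies \(v\le b_\rho\). The function \(v-b_\rho\) is subharmonic on the punctured polar disk and bounded above by \(C\), so the puncture at \(\infty\) is removable. Its boundary values on \(\partial C_\rho\) are \(\le0\), which follows at once from upper semicontinuity of \(v\) and continuity of \(b_\rho\); no shrinking of disks is needed.

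Your route is elementary and self-contained, and it proves more. It shows that \(b_\rho\) is the largest \(\omega_{\mathrm{FS}}\)-psh function \(v\le0\) with \(v\le\rho\log|\zeta|+O(1)\) near \(\infty\) (in dimension one, equivalently \(\nu(v,\infty)\ge\rho\)), i.e.\ a Green-type extremal function. Its price is that it relies on the explicit structure: a single isolated pole inside a disk on which \(b_\rho\) is \(\omega_{\mathrm{FS}}\)-harmonic.

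The paper's criterion uses only the positivity and support of the non-pluripolar measure. That is why the same one-line argument is reused for \(u\) in Lemma~\ref{lem:single-packing}, for the slices \(q_\tau\), and for the averaged potentials \(\Phi_\sigma\). For \(\Phi_\sigma\) the residual curvature is diffuse on an uncountable polar set, so your removable-singularity step does not apply directly.

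Two harmless slips. First, the glued profile is \(g\) for \(s\le s_0\) and the tangent line \(\ell\) for \(s\ge s_0\). This is neither \(\max(g,\ell)=g\) nor \(\min(g,\ell)=\ell\); its convexity comes from the derivative being continuous and nondecreasing. Second, near \(\infty\) you should subtract, not add, \(\tfrac12\log(1+|w|^2)=\tfrac12\log(1+|z|^2)-\log|z|\), which gives \(b_\rho=\rho\log|z|-\tfrac12\log(1+|z|^2)+\mathrm{const}\).
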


\begin{proof}
Set \(R_\rho^2=(1-\rho)/\rho\). Substitution into the exterior formula gives
zero at \(R_\rho\). If \(x=|w|^2\), the derivative of the exterior expression
with respect to \(\log x\) is
\[
 \frac12\left((1-\rho)-\frac{x}{1+x}\right),
\]
which also vanishes at \(x=R_\rho^2\), so the two pieces join with matching
first radial derivative and produce no boundary measure.

On \(C_\rho\), the potential is zero and its curvature current is
\(\omega_{\mathrm{FS}}\). On the exterior away from infinity,
\(\ddc b_\rho=-\omega_{\mathrm{FS}}\). In the coordinate
\(\zeta=1/w\) near infinity,
\[
 b_\rho=\rho\log|\zeta|+O(1),
\]
so the remaining curvature is \(\rho\delta_\infty\). This proves
\eqref{eq:cap-current}.

The Fubini--Study area of \(\{|w|^2\leq R^2\}\) equals
\(R^2/(1+R^2)\), hence \(C_\rho\) has area \(1-\rho\).
The non-pluripolar product discards the atom at the pole, giving
\(\one_{C_\rho}\omega_{\mathrm{FS}}\). The formula also shows
\(b_\rho\leq0\) with equality exactly on \(C_\rho\). Since the resulting positive-mass measure is supported on
\(\{b_\rho=0\}\), and
\(b_\rho\leq P_{\omega_{\mathrm{FS}}}[b_\rho]\leq0\), it vanishes on
\(\{b_\rho<P_{\omega_{\mathrm{FS}}}[b_\rho]\}\). Hence
\cite[Corollary~3.4(iv)]{DNT21} gives
\[
 P_{\omega_{\mathrm{FS}}}[b_\rho]=b_\rho.
\]
\end{proof}

For later use, introduce the cap-area coordinate
\[
 q=q(w):=\frac{1}{1+|w|^2},
 \qquad q(\infty):=0.
\]
Thus \(q(0)=1\), and \(q(w)\) is the Fubini--Study area of the spherical cap
from the north pole \(\infty\) down to the latitude through \(w\). In
particular,
\[
 U_\rho=\{q<\rho\},
 \qquad
 C_\rho=\{q\geq\rho\}.
\]

\begin{figure}[!htbp]
\centering
\begin{minipage}[c]{0.44\textwidth}
\centering
\begin{tikzpicture}[x=1cm,y=1cm,font=\small]
  \def\R{2.16}
  \def\capY{0.50}
  \def\capRy{0.30}
  \pgfmathsetmacro{\capX}{sqrt(\R*\R-\capY*\capY)}
  \pgfmathsetmacro{\capAngle}{asin(\capY/\R)}
  \path[capregion]
    (-\capX,\capY)
    arc[start angle={180-\capAngle},end angle={\capAngle},radius=\R]
    arc[start angle=0,end angle=-180,x radius=\capX,y radius=\capRy]
    -- cycle;
  \draw[sphereguide] (0,0) ellipse [x radius=.71,y radius=\R];
  \draw[sphereguide] (0,0) ellipse [x radius=\R,y radius=.52];
  \draw[capboundary] (0,0) circle (\R);
  \draw[capmid,densely dashed,line width=.34pt]
    (-\capX,\capY)
    arc[start angle=180,end angle=0,x radius=\capX,y radius=\capRy];
  \draw[capdark,line width=.62pt]
    (-\capX,\capY)
    arc[start angle=180,end angle=360,x radius=\capX,y radius=\capRy];
  \coordinate (p) at (0,\R);
  \fill[cappole] (p) circle (.072);
  \node[anchor=south west,font=\small] at ($(p)+(.10,.035)$)
    {$p_{\mathrm{cap}}=\infty$};
  \coordinate (s) at (0,-\R);
  \fill[cappole] (s) circle (.050);
  \node[anchor=north east] at ($(s)+(-.08,-.03)$) {$0$};
  \node at (-.72,1.22) {$U_\rho$};
  \node at (.58,-.98) {$C_\rho$};
\end{tikzpicture}
\smallskip
\textup{(a) The negative cap is centered at the north pole.}
\end{minipage}
\hfill
\begin{minipage}[c]{0.52\textwidth}
\centering
\begin{tikzpicture}
\def\rhoplot{0.35}
\begin{axis}[
  width=7.15cm,
  height=5.18cm,
  xmin=0,xmax=1,
  ymin=-1.12,ymax=.10,
  axis lines=left,
  axis line style={black,line width=.55pt},
  tick style={black},
  xtick={0,\rhoplot,1},
  xticklabels={$0$,$\rho$,$1$},
  ytick={0},
  yticklabels={$0$},
  xlabel={},
  ylabel={},
  clip=false
]
  \path[fill=caplight] (axis cs:0,-1.12) rectangle (axis cs:\rhoplot,.10);
  \draw[capdark,dashed,line width=.52pt]
    (axis cs:\rhoplot,-1.12)--(axis cs:\rhoplot,.09);
  \addplot[black,line width=1.05pt,domain=.001:\rhoplot,samples=200]
    {0.5*(\rhoplot*ln(x/\rhoplot)+(1-\rhoplot)*ln((1-x)/(1-\rhoplot)))};
  \addplot[black,line width=1.05pt]
    coordinates {(\rhoplot,0) (1,0)};
  \node[anchor=south west] at (axis description cs:.02,.94) {$b_\rho(q)$};
  \node[anchor=west] at (axis description cs:1.01,.09) {$q$};
  \node at (axis cs:.17,-.18) {$U_\rho$};
  \node at (axis cs:.70,-.18) {$C_\rho$};
\end{axis}
\end{tikzpicture}
\smallskip
\textup{(b) Exact radial profile in the cap-area coordinate $q$.}
\end{minipage}
\caption{The one-cap model $b_\rho$. In \textup{(a)}, the shaded region is
exactly the visible part of the negative geodesic cap $U_\rho$, centered at
the north pole $p_{\mathrm{cap}}=\infty$; its projected boundary is an ellipse,
whose hidden half is dashed.  The complementary contact region is $C_\rho$, and
$\omega_{\mathrm{FS}}(U_\rho)=\rho$.  In \textup{(b)},
$q=(1+|w|^2)^{-1}$ is the Fubini--Study cap-area coordinate measured outward
from the north pole, so $U_\rho=\{0\leq q<\rho\}$ and $b_\rho=0$ for
$q\geq\rho$.}
\label{fig:cap}
\end{figure}

\section{Atomic redistribution}\label{sec:atomic}

Fix \(\rho\in(0,1)\), and put
\[
 U_{\mathrm{cap}}=\{b_\rho<0\},
 \qquad
 C=\{b_\rho=0\}=\CP^1\setminus U_{\mathrm{cap}}.
\]
Let \(p_{\mathrm{cap}}\) be the pole of \(b_\rho\). Hence
\[
 \omega_{\mathrm{FS}}+\ddc b_\rho
 =
 \one_C\omega_{\mathrm{FS}}+\rho\delta_{p_{\mathrm{cap}}},
 \qquad
 \left\langle\omega_{\mathrm{FS}}+\ddc b_\rho\right\rangle
 =
 \one_C\omega_{\mathrm{FS}}.
 \tag{3.1}\label{eq:mother-cap-data}
\]

One Vitali redistribution already preserves the non-pluripolar
Monge--Amp\`ere measure.  The selected disks exhaust the negative cap up to
area zero, while the literal zero-contact set acquires an \(\omega_{\mathrm{FS}}\)-null remainder.

\begin{lem}[Single Vitali packing]\label{lem:single-packing}
There exists a pairwise-disjoint family of open Fubini--Study disks
\((B_j)_{j\geq1}\) such that
\[
 \overline{B_j}\subset U_{\mathrm{cap}}\setminus\{p_{\mathrm{cap}}\},
 \qquad
 \omega_{\mathrm{FS}}\left(
 U_{\mathrm{cap}}\setminus\bigcup_{j\geq1}B_j
 \right)=0.
 \tag{3.2}\label{eq:vitali-exhaustion}
\]
Let \(p_j\) be the center of \(B_j\) and put
\(m_j=\int_{B_j}\omega_{\mathrm{FS}}\). Choose
\(G_j\in\mathrm{PSU}(2)\) with \(G_j(\infty)=p_j\). Since \(G_j\) is a
Fubini--Study isometry,
\[
 B_j=G_j(U_{m_j}),
 \qquad
 \varphi_j:=b_{m_j}\circ G_j^{-1}.
\]
We call \(\varphi_j\) the rotated cap block associated with \(B_j\). Then
\[
 u:=\sum_{j\geq1}\varphi_j
\]
converges in \(L^1\) to a normalized positive-mass model potential. If
\[
 K:=\CP^1\setminus\bigcup_{j\geq1}B_j,
\]
then
\[
 \{u=0\}=K,
 \qquad
 C\subset K,
 \qquad
 p_{\mathrm{cap}}\in K\setminus C,
 \qquad
 \omega_{\mathrm{FS}}(K\setminus C)=0.
\]
Moreover,
\[
 \omega_{\mathrm{FS}}+\ddc u
 =
 \one_K\omega_{\mathrm{FS}}
 +\sum_{j\geq1}m_j\delta_{p_j},
 \tag{3.3}\label{eq:single-packing-current}
\]
and
\[
 \left\langle\omega_{\mathrm{FS}}+\ddc u\right\rangle
 =
 \one_K\omega_{\mathrm{FS}}
 =
 \one_C\omega_{\mathrm{FS}}
 \quad\text{as Borel measures}.
 \tag{3.4}\label{eq:single-packing-np}
\]
In particular, \(u(p_{\mathrm{cap}})=0\), whereas
\(b_\rho(p_{\mathrm{cap}})=-\infty\).
\end{lem}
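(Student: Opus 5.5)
Existence of the packing comes first: I apply the Vitali covering theorem for the doubling measure \(\omega_{\mathrm{FS}}\) on the open set \(U_{\mathrm{cap}}\setminus\{p_{\mathrm{cap}}\}\). The family of closed Fubini--Study disks contained in it, with arbitrarily small radii about each point, is a Vitali cover. This yields countably many pairwise disjoint disks exhausting the set up to measure zero. Shrinking each selected closed disk to its open interior costs only the boundary circles, which are null, and \(\{p_{\mathrm{cap}}\}\) is null, so \eqref{eq:vitali-exhaustion} follows. Since \(\sum_j m_j=\omega_{\mathrm{FS}}(U_{\mathrm{cap}})=\rho<1\), each \(m_j\in(0,1)\) and \(b_{m_j}\) is defined.

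Next I compute the current of \(u\). By Proposition~\ref{prop:cap-models} and isometry invariance, \(\varphi_j\le 0\), \(\varphi_j=0\) off \(B_j\), and
\[
 \omega_{\mathrm{FS}}+\ddc\varphi_j=\one_{\CP^1\setminus B_j}\omega_{\mathrm{FS}}+m_j\delta_{p_j}.
\]
Thus \(\ddc\varphi_j=-\one_{B_j}\omega_{\mathrm{FS}}+m_j\delta_{p_j}\), a signed measure of total variation \(2m_j\). Convergence of \(u\) needs a uniform \(L^1\) bound. Since \(\varphi_j\) is supported in \(B_j\), the partial sums are pointwise decreasing and each point lies in at most one \(B_j\); hence \(\sum_j\varphi_j\) equals \(\varphi_j\) on \(B_j\) and \(0\) on \(K\). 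What remains is \(\sum_j\|\varphi_j\|_{L^1}<\infty\). Using the explicit profile in the \(q\)-coordinate, \(\int|b_m|\,\omega_{\mathrm{FS}}=O(m^2\log(1/m))\) or at worst \(O(m)\), and in either case the sum converges because \(\sum m_j<\infty\). The partial sums are then \(\omega_{\mathrm{FS}}\)-psh, decrease, and converge in \(L^1\) to a function that is not identically \(-\infty\), so \(u\in\PSH\). Because the supports are disjoint, \(u\) is given pointwise by the formula above and no usc regularization is needed at points of \(K\): near such a point \(u\) is \(\le0\) with value \(0\), so upper semicontinuity is automatic. Summing the distributional identities with weak convergence (the total variations \(2m_j\) are summable) gives
\[
 \omega_{\mathrm{FS}}+\ddc u=\omega_{\mathrm{FS}}-\sum_j\one_{B_j}\omega_{\mathrm{FS}}+\sum_j m_j\delta_{p_j},
\]
which is \eqref{eq:single-packing-current}.

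The set identities then follow. The zero set is \(\{u=0\}=K\), since \(\varphi_j<0\) on \(B_j\). Because \(B_j\subset U_{\mathrm{cap}}\) and \(p_{\mathrm{cap}}\notin\bigcup B_j\), we get \(C\subset K\) and \(p_{\mathrm{cap}}\in K\setminus C\), while \(\omega_{\mathrm{FS}}(K\setminus C)=0\) by \eqref{eq:vitali-exhaustion}. For the non-pluripolar part, the atoms sit on the polar set \(\{u=-\infty\}=\{p_j\}\), and in dimension one \(\langle\cdot\rangle\) discards exactly these, leaving \(\one_K\omega_{\mathrm{FS}}=\one_C\omega_{\mathrm{FS}}\) and giving \eqref{eq:single-packing-np}. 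The total mass is \(1-\rho>0\).

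Finally, the model property follows as in Proposition~\ref{prop:cap-models}. We have \(\sup u=0\) and \(u\le P[u]\le0\), and the non-pluripolar measure is supported on \(\{u=0\}\), so it vanishes on \(\{u<P[u]\}\). Then \cite[Corollary~3.4(iv)]{DNT21} gives \(P[u]=u\). The main obstacle is the convergence and regularity step: it needs a clean summable bound on \(\|b_m\|_{L^1}\), and it must confirm that the limit is genuinely the pointwise formula (usc at accumulation points of the \(B_j\) in \(K\)) so that \(\{u=0\}=K\) holds literally rather than only almost everywhere.
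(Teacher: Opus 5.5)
Your proposal is correct and follows the paper's proof: Vitali packing of the punctured cap, summing the rotated cap currents over disjoint disks to get the current of $u$, reading off $\{u=0\}=K$ and the non-pluripolar part, and concluding modelity via \cite[Corollary~3.4(iv)]{DNT21}. The only difference is that you prove $L^1$ convergence with an explicit summable bound on $\|b_m\|_{L^1}$ (true, indeed $\leq m^2/2$). The paper does not need this bound, because every partial sum vanishes on $C$, so the decreasing-limit theorem already gives an $\omega_{\mathrm{FS}}$-psh limit with $L^1$ convergence and automatic upper semicontinuity.
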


\begin{figure}[!htbp]
\centering
\begin{minipage}[t]{0.72\textwidth}
\centering
\begin{tikzpicture}[x=1cm,y=1cm,font=\small]
  \def\R{2.04}
  \def\capY{0.40}
  \def\capRy{0.28}
  \pgfmathsetmacro{\capX}{sqrt(\R*\R-\capY*\capY)}
  \pgfmathsetmacro{\capAngle}{asin(\capY/\R)}
  \path[capregion]
    (-\capX,\capY)
    arc[start angle={180-\capAngle},end angle={\capAngle},radius=\R]
    arc[start angle=0,end angle=-180,x radius=\capX,y radius=\capRy]
    -- cycle;
  \draw[sphereguide] (0,0) ellipse [x radius=.66,y radius=\R];
  \draw[sphereguide] (0,0) ellipse [x radius=\R,y radius=.47];
  \draw[capboundary] (0,0) circle (\R);
  \draw[capmid,densely dashed,line width=.33pt]
    (-\capX,\capY)
    arc[start angle=180,end angle=0,x radius=\capX,y radius=\capRy];
  \draw[capdark,line width=.60pt]
    (-\capX,\capY)
    arc[start angle=180,end angle=360,x radius=\capX,y radius=\capRy];
  \coordinate (pcap) at (0,\R);
  \fill[cappole] (pcap) circle (.068);
  \node[anchor=south west] at ($(pcap)+(.09,.025)$) {$p_{\mathrm{cap}}$};
  \foreach \x/\y/\rx/\ry in {
    -1.18/.88/.46/.30,
      .00/.96/.40/.27,
      1.18/.88/.46/.30,
    -.78/1.48/.22/.16,
      .78/1.48/.22/.16,
    -1.55/.56/.15/.11,
      .00/.53/.15/.11,
      1.55/.56/.15/.11
  }{
    \filldraw[vitalidisk] (\x,\y) ellipse [x radius=\rx,y radius=\ry];
    \fill[vitalipole] (\x,\y) circle ({min(.027,.13*\rx)});
  }
  \node[anchor=east] at (-1.30,1.62) {$U_{\mathrm{cap}}$};
  \node at (-1.18,.88) {$B_1$};
  \node at (.00,.96) {$B_2$};
  \node at (1.18,.88) {$B_3$};
  \node at (.46,-.93) {$C$};
\end{tikzpicture}
\smallskip
\textup{(a) Selected disjoint subfamily inside the mother cap.}
\end{minipage}
\medskip
\begin{minipage}[t]{0.34\textwidth}
\centering
\begin{tikzpicture}[x=1cm,y=1cm,font=\small]
  \path[use as bounding box] (-1.35,-.92) rectangle (1.35,1.08);
  \filldraw[vitalidisk,line width=.75pt] (0,.08) circle (.90);
  \fill[vitalipole] (0,.08) circle (.066);
  \node at (0,.55) {$B_j$};
  \node[anchor=west] at (.15,.08) {$p_j$};
  \node at (0,-.48) {$\varphi_j<0$};
\end{tikzpicture}
\smallskip
\textup{(b) One cap block.}
\end{minipage}
\hspace{0.055\textwidth}
\begin{minipage}[t]{0.51\textwidth}
\centering
\begin{tikzpicture}[x=1cm,y=1cm,font=\small]
  \path[use as bounding box] (-1.65,-.92) rectangle (2.18,1.08);
  \fill[vitalipole] (-1.28,.18) circle (.090);
  \node[anchor=south] at (-1.28,.40) {$\rho\delta_{p_{\mathrm{cap}}}$};
  \draw[vitaliflow] (-.91,.18)--(-.18,.18);
  \foreach \x/\y/\r in {
    .30/.39/.068,
    .82/.43/.052,
    .48/.02/.058,
    1.10/.05/.042
  }{
    \fill[vitalipole] (\x,\y) circle (\r);
  }
  \node[font=\scriptsize,anchor=north] at (.48,-.34)
    {$\displaystyle
      \sum_{j\ge1}m_j\delta_{p_j}
      \qquad
      \sum_{j\ge1}m_j=\rho$};
\end{tikzpicture}
\smallskip
\textup{(c) Polar-mass redistribution.}
\end{minipage}
\caption{Geometry of Lemma~\ref{lem:single-packing}.  Panel
\textup{(a)} shows a finite sample of the selected disjoint Vitali family,
\textup{(b)} one cap block, and \textup{(c)} the redistribution of the removed
curvature into polar mass.}
\label{fig:vitali-packing}
\end{figure}

\begin{proof}
The Fubini--Study disks compactly contained in
\(U_{\mathrm{cap}}\setminus\{p_{\mathrm{cap}}\}\) form a Vitali cover of this
punctured cap. The Vitali covering theorem gives a pairwise-disjoint selected
family whose union has full area in
\(U_{\mathrm{cap}}\setminus\{p_{\mathrm{cap}}\}\). Since a point has zero
area, \eqref{eq:vitali-exhaustion} follows and
\[
 \sum_{j\geq1}m_j=\rho.
\]

By the definition of the rotated cap blocks and projective-unitary invariance
of \(\omega_{\mathrm{FS}}\), Proposition~\ref{prop:cap-models} gives
\[
 \varphi_j<0\ \text{on }B_j,
 \qquad
 \varphi_j=0\ \text{on }\CP^1\setminus B_j,
 \qquad
 \ddc\varphi_j
 =-\one_{B_j}\omega_{\mathrm{FS}}+m_j\delta_{p_j}.
\]
For \(N\geq1\), set
\[
 u_N=\sum_{j=1}^N\varphi_j.
\]
Since the sets \(B_j\) are pairwise disjoint,
\[
 \omega_{\mathrm{FS}}+\ddc u_N
 =\one_{\CP^1\setminus\bigcup_{j=1}^N B_j}\omega_{\mathrm{FS}}
 +\sum_{j=1}^N m_j\delta_{p_j}\geq0.
\]
Thus \(u_N\in\PSH(\CP^1,\omega_{\mathrm{FS}})\).  The sequence decreases,
and every \(u_N\) vanishes on the fixed contact region \(C\); in particular,
it does not converge identically to \(-\infty\).  The decreasing-limit theorem
for \(\omega_{\mathrm{FS}}\)-psh functions therefore gives
\[
 u_N\downarrow u\in\PSH(\CP^1,\omega_{\mathrm{FS}}),
\]
and the convergence also holds in \(L^1\).  Passing to distributional
derivatives gives \eqref{eq:single-packing-current}.

Because the negative sets are disjoint, \(\{u=0\}=K\). Since every
\(B_j\) avoids \(p_{\mathrm{cap}}\), one has
\(p_{\mathrm{cap}}\in K\setminus C\), while
\eqref{eq:vitali-exhaustion} gives
\(\omega_{\mathrm{FS}}(K\setminus C)=0\). Every atom in
\eqref{eq:single-packing-current} lies at a pole of \(u\), so the
non-pluripolar product is \(\one_K\omega_{\mathrm{FS}}\), which equals
\(\one_C\omega_{\mathrm{FS}}\) as a Borel measure. This positive-mass measure is supported on \(\{u=0\}\). Since
\(u\leq P_{\omega_{\mathrm{FS}}}[u]\leq0\), it vanishes on
\(\{u<P_{\omega_{\mathrm{FS}}}[u]\}\). Hence
\cite[Corollary~3.4(iv)]{DNT21} gives
\(P_{\omega_{\mathrm{FS}}}[u]=u\). Finally,
\(u(p_{\mathrm{cap}})=0\) because every summand vanishes there.
\end{proof}

Lemma~\ref{lem:single-packing} already gives distinct normalized
positive-mass model potentials \(b_\rho\) and \(u\) with
\[
 \left\langle\omega_{\mathrm{FS}}+\ddc b_\rho\right\rangle
 =
 \left\langle\omega_{\mathrm{FS}}+\ddc u\right\rangle.
\]
Their zero-contact sets are not equal literally, since
\(p_{\mathrm{cap}}\in\{u=0\}\setminus\{b_\rho=0\}\), but they agree
\(\omega_{\mathrm{FS}}\)-almost everywhere. This separates measure-theoretic from literal contact equality.

\begin{rem}[Locality of the curve construction]\label{rem:local-construction}
The Vitali theorem is used only to choose disjoint local blocks whose union
has full \(\omega\)-measure in the region where the curvature is redistributed.
More precisely, let
\((X,\omega)\) be a compact K\"ahler curve and let \(U\subset X\) be open with
\[
 \omega(X\setminus U)>0.
\]
Assume that there is a countable pairwise disjoint family of relatively compact
open sets \(B_j\subset U\) such that
\[
 \omega\!\left(U\setminus\bigcup_j B_j\right)=0.
\]
Choose points \(p_j\in B_j\).  Suppose that for each \(j\) there is a potential
\(\varphi_j\in\PSH(X,\omega)\) satisfying
\[
 \varphi_j=0\quad\text{on }X\setminus B_j,
 \qquad
 \varphi_j<0\quad\text{on }B_j,
 \qquad
 \varphi_j(p_j)=-\infty,
\]
and
\[
 \omega+\ddc\varphi_j
 =
 \one_{X\setminus B_j}\omega+\omega(B_j)\delta_{p_j}.
 \tag{3.8}\label{eq:local-redistribution}
\]

For
\[
 u_N=\sum_{j=1}^N\varphi_j,
\]
disjointness gives
\[
 \omega+\ddc u_N
 =
 \one_{X\setminus\bigcup_{j=1}^N B_j}\omega
 +
 \sum_{j=1}^N\omega(B_j)\delta_{p_j}\geq0.
\]
Hence \(u_N\in\PSH(X,\omega)\). The sequence decreases and is identically
zero on \(X\setminus U\), so its limit is not identically \(-\infty\).
The decreasing-limit theorem for quasi-psh functions therefore gives
\(u\in\PSH(X,\omega)\). Since every summand is nonpositive and
\(u=0\) on \(X\setminus U\), the limit is normalized by
\(\sup_X u=0\). Passing to the limit yields
\[
 \omega+\ddc u
 =
 \one_{X\setminus\bigcup_j B_j}\omega
 +
 \sum_j\omega(B_j)\delta_{p_j}.
\]
Each atom lies in the polar locus of \(u\).  Since the sets \(B_j\) fill
\(U\) up to zero \(\omega\)-measure,
\[
 \left\langle\omega+\ddc u\right\rangle
 =
 \one_{X\setminus U}\omega.
\]
This measure has positive mass and is carried by \(\{u=0\}\). Since
\(u\leq P_\omega[u]\leq0\), it vanishes on \(\{u<P_\omega[u]\}\), and
\cite[Corollary~3.4(iv)]{DNT21} gives
\[
 P_\omega[u]=u.
\]

Once this family has been chosen, the rest of the argument uses only its
disjointness and the condition that its union has full \(\omega\)-measure in
\(U\).  In our Fubini--Study construction, the Vitali theorem provides such a
family.  In general,
\[
 \{u=0\}=X\setminus\bigcup_j B_j,
\]
so this contact set agrees with \(X\setminus U\) only
\(\omega\)-almost everywhere.

On a general K\"ahler curve, the construction of one redistribution block can
be viewed as a local free-boundary problem.  In a coordinate neighborhood of
\(p\), write \(\omega=\ddc\rho\).  One seeks a domain \(B\ni p\), a
constant \(\lambda>0\), and \(C\in\RR\) such that
\[
 \varphi(z)=
 \begin{cases}
  \lambda\log|z-p|-\rho(z)+C,& z\in B,\\
  0,& z\notin B,
 \end{cases}
\]
defines an \(\omega\)-psh function.  To glue the two pieces without creating
a current on \(\partial B\), the natural boundary conditions are
\[
 \lambda\log|z-p|-\rho+C=0,
 \qquad
 d\!\left(\lambda\log|z-p|-\rho\right)=0
 \quad\text{on }\partial B.
\]
Under these conditions, the curvature inside \(B\) is concentrated at the
logarithmic pole, while outside \(B\) it is unchanged.  Thus the existence of
such a block is naturally an obstacle or free-boundary problem.  At the level
of the logarithmic singularity, the corresponding higher-dimensional
expression in holomorphic coordinates \(z=(z_1,\ldots,z_n)\) centered at
\(p\) is
\[
 \log\|z\|_2
 =\frac12\log\!\left(|z_1|^2+\cdots+|z_n|^2\right),
\]
where \(\|\cdot\|_2\) denotes the Euclidean norm in these coordinates.
\end{rem}

\section{Polar averaging and multiplier ideals}\label{sec:diffuse}

Starting from the single-packing model of Lemma~\ref{lem:single-packing}, we
average its residual atoms along polar subsets of rotation orbits.  The
averaged curvature is atomless, remains supported on a polar set, and leaves
the non-pluripolar Monge--Amp\`ere measure unchanged.

\begin{thm}[Model nonuniqueness with identical multiplier ideals]\label{thm:principal-b}
For every \(\rho\in(0,1)\), there exist distinct normalized positive-mass
model potentials
\[
 \phi,\psi\in\PSH(\CP^1,\omega_{\mathrm{FS}}),
\]
and a closed Fubini--Study disk \(C\subset\CP^1\) such that their
non-pluripolar Monge--Amp\`ere measures agree as Borel measures,
\[
 \left\langle\omega_{\mathrm{FS}}+\ddc\phi\right\rangle
 =
 \left\langle\omega_{\mathrm{FS}}+\ddc\psi\right\rangle
 =
 \one_C\omega_{\mathrm{FS}},
 \tag{4.1}\label{eq:multiplier-common-measure}
\]
with total mass \(1-\rho>0\). By
\cite[Corollary~3.4(ii)]{DNT21}, their zero-contact sets agree
\(\omega_{\mathrm{FS}}\)-almost everywhere:
\[
 \one_{\{\phi=0\}}
 =
 \one_{\{\psi=0\}}
 =
 \one_C
 \qquad \omega_{\mathrm{FS}}\text{-a.e.}
 \tag{4.2}\label{eq:multiplier-contact}
\]
Moreover,
\[
 \nu(\phi,x)=\nu(\psi,x)=0
 \qquad\text{for every }x\in\CP^1,
 \tag{4.3}\label{eq:multiplier-zero-lelong}
\]
and
\[
 \cI(c\phi)=\OO_{\CP^1}=\cI(c\psi)
 \qquad\text{for every real }c>0.
 \tag{4.4}\label{eq:multiplier-trivial}
\]
\end{thm}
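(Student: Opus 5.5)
We take \(\psi:=u\) from Lemma~\ref{lem:single-packing}, or a variant of it, and we construct \(\phi\) by polar averaging.

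The two constraints pull in opposite directions. Zero Lelong numbers force the curvature \(\omega_{\mathrm{FS}}+\ddc\phi\) to be atomless. On the other hand, the non-pluripolar product must still equal \(\one_C\omega_{\mathrm{FS}}\), so the singular part \(\rho\)-mass must sit on \(\{\phi=-\infty\}\). In dimension one, an atomless measure on a compact polar set with finite logarithmic energy would give \(\phi\) bounded on its support. So we need atomless measures carried by polar sets whose logarithmic potential is still \(-\infty\) on the support. A classical device is to take each atom \(m_j\delta_{p_j}\) and spread it, by averaging over rotations, onto a compact polar Cantor-type set \(E_j\). We choose \(E_j\) with zero logarithmic capacity, inside a small circle around \(p_j\) contained in \(B_j\) (a rotation orbit about \(p_j\)), and take \(\mu_j\) to be an atomless probability measure on \(E_j\) of infinite energy. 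Concretely, we parametrize the circle by angle \(\theta\in[0,2\pi)\) and let \(\mu_j\) be the pushforward of a Cantor measure with very fast decaying dissection ratios. Fast enough decay makes the set have zero logarithmic capacity. Then the potential \(\int\log|z-\zeta|\,d\mu_j(\zeta)\) equals \(-\infty\) at \(\mu_j\)-a.e.\ point, indeed on \(E_j\) after suitable choice, while \(\mu_j\) has no atoms.

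We define \(\phi_j\) by averaging the rotated cap block: \(\phi_j:=\int \varphi_j\circ R_{\theta}\,d\mu(\theta)\), where \(R_\theta\) are rotations moving \(p_j\) along the chosen circle. This destroys the property \(\phi_j=0\) off \(B_j\), because the rotated disks sweep out an annular region. To fix this, we shrink the block first: we use a smaller cap block \(b_{m}\) centered at \(p_j\), whose orbit under the rotations in the support stays inside a region we control. More cleanly, we replace the single packing disks by disks \(D_j\) and average the blocks \(\varphi_{D}\) over a family of translated disks \(D_\theta\). Then \(\omega+\ddc\phi_j=\one_{\CP^1\setminus\cdot}\,\omega\) averaged over \(\theta\), plus \(m_j(\text{pushforward of }\mu)\). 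The \(\omega\)-part averages to \(\omega\) times \((1-\text{density})\), which is no longer an indicator. This is the main obstacle: averaging smears the removed area, so the non-pluripolar part becomes \(\bigl(1-\int\one_{D_\theta}d\mu\bigr)\omega\) instead of \(\one_{K}\omega\). I would handle it with a Vitali packing of \(U_{\mathrm{cap}}\) by annular, rotation-invariant regions. We pick a disk \(B\) and a center \(p\), and replace the cap block by a block whose support is a thin annulus \(A_j\) invariant under rotation about an axis. Its singular measure is the \(\mu_j\)-average of the atom along the core circle. In dimension one, the obstacle problem in Remark~\ref{rem:local-construction} with \(\lambda\log|z-p|\) replaced by \(\int\log|z-\zeta|d\nu\) (with \(\nu=\omega(A_j)\mu_j\)) is solvable. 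We take \(\varphi_j:=P_\omega\) of the \(\omega\)-psh function whose current is \(\one_{X\setminus A_j}\omega+\nu\), pinned to \(0\). The balayage identity shows that the non-contact set is exactly some region \(A_j'\supset\operatorname{supp}\nu\). We then Vitali-pack \(U_{\mathrm{cap}}\) with such regions, each of which is a scaled copy under the \(\mathrm{PSU}(2)\)-invariance. The regions \(A_j'\) together with their dilates form a Vitali cover if they have bounded eccentricity, which holds by rotating a fixed model.

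With the blocks in hand, the rest repeats Lemma~\ref{lem:single-packing} verbatim, in the general form of Remark~\ref{rem:local-construction}. The partial sums decrease, their limit \(\phi\) is \(\omega\)-psh and vanishes on \(C\), and \(\omega+\ddc\phi=\one_K\omega+\sum_j\nu_j\). Each \(\nu_j\) is atomless and carried by \(\{\phi=-\infty\}\), hence discarded. This gives \eqref{eq:multiplier-common-measure}, and \cite[Corollary~3.4(iv)]{DNT21} gives \(P[\phi]=\phi\). The Lelong number at \(x\) is the atom of the current at \(x\), which is zero, so \eqref{eq:multiplier-zero-lelong} holds. The same must hold for \(\psi\), so \(\psi\) cannot be \(u\) itself, which has atoms. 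Instead we take a second averaged construction with different circles, or different Cantor measures, so that \(\phi\ne\psi\); for instance the two polar loci differ. Finally, \eqref{eq:multiplier-contact} follows from \cite[Corollary~3.4(ii)]{DNT21}. Zero Lelong numbers on a curve give \(\cI(c\phi)=\OO\) by Skoda's integrability theorem, or via \cite[Proposition~2.2(ii)]{DX24} as indicated in the introduction, which yields \eqref{eq:multiplier-trivial}.
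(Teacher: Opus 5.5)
\textbf{There is a genuine gap: the block construction.} Everything downstream in your plan depends on a redistribution block whose singular part is an atomless polar measure, and that block is never established.

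Your concrete version is a block vanishing off a rotation-invariant annulus $A_j$, with singular part an atomless measure $\nu_j$ on a polar Cantor subset of the core circle. Such a block cannot exist. Put $U^\mu(w)=\int\log|w-\zeta|\,d\mu(\zeta)$ and use the affine chart centred on the rotation axis. Then $\varphi_j=U^{\nu_j}-U^{\one_{A_j}\omega_{\mathrm{FS}}}+c$. By radial symmetry $U^{\one_{A_j}\omega_{\mathrm{FS}}}$ is constant on the inner complementary disk, so $\varphi_j=0$ there forces $U^{\nu_j}$ to be constant there too. For a measure on a circle, this kills every nonzero Fourier coefficient, so $\nu_j$ is the uniform measure. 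That measure has bounded potential, is not polar, and is retained by the non-pluripolar product.

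Your fallback is an obstacle or partial-balayage solution with an unknown non-contact region $A_j'$, and it is only asserted. You would have to prove four things:
\begin{itemize}
\item such solutions exist on $(\CP^1,\omega_{\mathrm{FS}})$ with singular data on a polar set;
\item the singular data are not swept out;
\item $A_j'\subset U_{\mathrm{cap}}$;
\item blocks exist at every point and every small scale with uniformly bounded eccentricity, as Vitali requires.
\end{itemize}
Your justification of the last point (``scaled copies under $\mathrm{PSU}(2)$'', ``rotating a fixed model'') fails, because $\mathrm{PSU}(2)$ acts by isometries and contains no dilations.

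\textbf{The missing idea.} The smearing you identified disappears if you average the \emph{whole} packed potential $u$ of Lemma~\ref{lem:single-packing}, using rotations $R_\theta$ about $p_{\mathrm{cap}}$ rather than about the $p_j$. These rotations preserve $C$. Since the disks fill $U_{\mathrm{cap}}$ up to area zero, for every $\theta$ one has, as measures,
\[
\omega_{\mathrm{FS}}+\ddc(u\circ R_{-\theta})=\one_C\omega_{\mathrm{FS}}+(R_\theta)_*\nu .
\]
Averaging then gives exactly $\omega_{\mathrm{FS}}+\ddc\Phi_\sigma=\one_C\omega_{\mathrm{FS}}+\nu_\sigma$. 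The rotated disks for different $\theta$ overlap, but this is harmless: you are averaging currents whose absolutely continuous parts all equal $\one_C\omega_{\mathrm{FS}}$, not indicator functions of individual blocks. No new obstacle problem and no second packing are needed.

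\textbf{What remains} is what you already anticipated:
\begin{itemize}
\item an atomless $\sigma$ on a polar Cantor set whose logarithmic potential is $-\infty$ at every point of its support;
\item the bound $u\le\varphi_j\le m_j\log|z|+O(1)$ along each orbit, which places $\nu_\sigma$ on $\{\Phi_\sigma=-\infty\}$;
\item \cite[Corollary~3.4]{DNT21} for modelity and the contact sets;
\item atomlessness of $\nu_\sigma$ for the Lelong numbers;
\item Skoda or \cite{DX24} for the multiplier ideals.
\end{itemize}
For distinctness, take two such measures on small arcs on which $\theta\mapsto\int f\circ R_\theta\,d\nu$ has disjoint ranges; this gives $\phi\neq\psi$.
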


\begin{proof}
Fix \(\rho\in(0,1)\). Lemma~\ref{lem:single-packing} gives an open cap
\(U_{\mathrm{cap}}\), its contact region
\(C=\CP^1\setminus U_{\mathrm{cap}}\), its center \(p_{\mathrm{cap}}\), and a
normalized positive-mass model potential
\[
 u = \sum_{j\geq1}\varphi_j
\]
such that
\[
 \{u=0\}=K,
 \qquad
 C\subset K,
 \qquad
 \omega_{\mathrm{FS}}(K\setminus C)=0,
 \qquad
 u(p_{\mathrm{cap}})=0,
\]
and
\[
 \omega_{\mathrm{FS}}+\ddc u
 =
 \one_K\omega_{\mathrm{FS}}+\nu
 =
 \one_C\omega_{\mathrm{FS}}+\nu
 \quad\text{as measures},
 \tag{4.5}\label{eq:packed-current}
\]
where
\[
 \nu=\sum_{j\geq1}m_j\delta_{p_j},
 \qquad
 \sum_{j\geq1}m_j=\rho.
 \tag{4.6}\label{eq:atomic-decomposition}
\]
Every \(p_j\) lies in
\(U_{\mathrm{cap}}\setminus\{p_{\mathrm{cap}}\}\), and \(\varphi_j\leq0\)
has a logarithmic pole of weight \(m_j\) at \(p_j\).

Let \(R_\theta\), for
\(\theta\in\RR/(2\pi\mathbb Z)\), be the projective-unitary circle of rotations
about \(p_{\mathrm{cap}}\). It preserves
\(\omega_{\mathrm{FS}}\), \(U_{\mathrm{cap}}\), and \(C\). Its only fixed
points are \(p_{\mathrm{cap}}\) and the antipodal point, which belongs to \(C\).
No \(p_j\) is fixed, and every orbit map
\[
 \theta\longmapsto R_\theta p_j
\]
is injective.

\begin{figure}[t]
\centering
\begin{tikzpicture}[x=1cm,y=1cm]
  \begin{scope}[shift={(3.15,3.55)}]
    \node[font=\small] at (0,1.42) {\textup{(a) Angular support}};
    \draw[capboundary] (0,0) circle (1.02);
    \node[font=\scriptsize] at (-.82,-.83) {$S^1$};
    \draw[rotorbit,line width=.95pt] plot[domain=18:128,samples=100]
      ({1.02*cos(\x)},{1.02*sin(\x)});
    \foreach \t in {18,128}{
      \draw[orbitblue,line width=.95pt]
        ({.92*cos(\t)},{.92*sin(\t)}) -- ({1.12*cos(\t)},{1.12*sin(\t)});
    }
    \node[font=\scriptsize,text=orbitblue] at (.34,1.17) {$J$};
    \foreach \t in {24,27,30,33,40,44,53,56,60,65,76,80,83,93,96,99,108,111,115,121,124}{
      \fill[masspoint] ({1.02*cos(\t)},{1.02*sin(\t)}) circle (.024);
    }
    \node[font=\scriptsize,text=massred] at (.18,.73) {$A$};
  \end{scope}
  \begin{scope}[shift={(0,0)}]
    \node[font=\small] at (0,2.35) {\textup{(b) One atomic pole}};
    \fill[capregion] (0,0) circle (1.55);
    \begin{scope}
      \clip (0,0) circle (1.55);
      \fill[capregion] (-1.55,0.42) rectangle (1.55,1.55);
    \end{scope}
    \draw[capboundary] (0,0) circle (1.55);
    \draw[sphereguide] (0,0) ellipse [x radius=.56,y radius=1.55];
    \draw[sphereguide] (0,0) ellipse [x radius=1.55,y radius=.40];
    \draw[capdark,line width=.55pt] (-1.44,0.42)
      arc[start angle=180,end angle=360,x radius=1.44,y radius=.30];
    \draw[sphereguide,densely dashed] (1.44,0.42)
      arc[start angle=0,end angle=180,x radius=1.44,y radius=.30];
    \fill[cappole] (0,1.55) circle (.058);
    \node[font=\scriptsize,above=1pt] at (0,1.55) {$p_{\mathrm{cap}}$};
    \node[font=\scriptsize] at (-.73,.96) {$U_{\mathrm{cap}}$};
    \node[font=\scriptsize] at (-.68,-.53) {$C$};
    \draw[draw=orbitblue!55,dashed,line width=.72pt]
      (0,.86) ellipse [x radius=1.28,y radius=.19];
    \coordinate (pleft) at (1.01,.98);
    \fill[masspoint] (pleft) circle (.060);
    \node[font=\scriptsize,right=3pt] at (pleft) {$p$};
    \node[font=\footnotesize] at (0,-1.98) {$a\,\delta_p$};
  \end{scope}
  \draw[vitaliflow] (2.05,.15)--(4.25,.15);
  \node[font=\scriptsize,align=center] at (3.15,.72)
    {rotate $p$ by $R_\theta$\\and average, $\theta\sim\sigma$};
  \begin{scope}[shift={(6.3,0)}]
    \node[font=\small] at (0,2.35) {\textup{(c) Diffuse orbit mass}};
    \fill[capregion] (0,0) circle (1.55);
    \begin{scope}
      \clip (0,0) circle (1.55);
      \fill[capregion] (-1.55,0.42) rectangle (1.55,1.55);
    \end{scope}
    \draw[capboundary] (0,0) circle (1.55);
    \draw[sphereguide] (0,0) ellipse [x radius=.56,y radius=1.55];
    \draw[sphereguide] (0,0) ellipse [x radius=1.55,y radius=.40];
    \draw[capdark,line width=.55pt] (-1.44,0.42)
      arc[start angle=180,end angle=360,x radius=1.44,y radius=.30];
    \draw[sphereguide,densely dashed] (1.44,0.42)
      arc[start angle=0,end angle=180,x radius=1.44,y radius=.30];
    \fill[cappole] (0,1.55) circle (.058);
    \node[font=\scriptsize,above=1pt] at (0,1.55) {$p_{\mathrm{cap}}$};
    \node[font=\scriptsize] at (-.73,.96) {$U_{\mathrm{cap}}$};
    \node[font=\scriptsize] at (-.68,-.53) {$C$};
    \draw[draw=orbitblue!55,dashed,line width=.72pt]
      (0,.86) ellipse [x radius=1.28,y radius=.19];
    \foreach \t in {24,27,30,33,40,44,53,56,60,65,76,80,83,93,96,99,108,111,115,121,124}{
      \fill[masspoint] ({1.28*cos(\t)},{.86+.19*sin(\t)}) circle (.022);
    }
    \node[font=\scriptsize,text=massred] at (.15,1.20) {$R_A(p)$};
    \node[font=\footnotesize] at (0,-1.98)
      {$a\,(\theta\mapsto R_\theta p)_*\sigma$};
  \end{scope}
\end{tikzpicture}
\caption{Angular and spatial pictures of the averaging. Panel
\textup{(a)} shows the atomless measure $\sigma$ supported on a Cantor-type
set $A$ contained in an open arc $J\subset S^1$; the red dots are a schematic
sample of this thin support. Panel \textup{(b)} shows the atomic pole $p$ on
its latitude orbit under $R_\theta$, and panel \textup{(c)} shows the diffuse
orbit mass obtained by averaging over $\theta\in A$:
$a\,\delta_p$ is replaced by $a\,(\theta\mapsto R_\theta p)_*\sigma$,
supported on $R_A(p)\subset\CP^1$.}
\label{fig:rotation-diffusion}
\end{figure}

Let \(J\) be a nonempty open arc in
\(\RR/(2\pi\mathbb Z)\). Recursively choose closed arcs \(J_s\), indexed by
finite binary words \(s\), such that the two children \(J_{s0}\) and \(J_{s1}\)
have disjoint closures in the interior of \(J_s\). At level \(n\geq1\), require
\[
 |J_s|\leq \ell_n=\exp(-2^{3n}).
\]
Set
\[
 A=\bigcap_{n\geq1}\bigcup_{|s|=n}J_s,
\]
and let \(\sigma\) be the fair Bernoulli probability on \(A\), characterized by
\[
 \sigma(A\cap J_s)=2^{-n}\qquad (|s|=n).
\]
The measure is atomless, since the masses of the nested cylinders containing
a point tend to zero.

For \(\theta\in A\), let \(J_{s_n(\theta)}\) be its level-\(n\) cylinder.
Chordal distance on the unit circle is at most arc length, so
\[
 \begin{aligned}
 \int_A\log|e^{i\theta}-e^{i\eta}|\,d\sigma(\eta)
 &\leq
 2^{-n}\log\ell_n+(1-2^{-n})\log2\\
 &\leq -2^{2n}+\log2.
 \end{aligned}
 \tag{4.7}\label{eq:polar-log-potential}
\]
Letting \(n\to\infty\) shows that the logarithmic potential is
\(-\infty\) at every point of \(A\). The function
\[
 H(z)=\int_A\log|z-e^{i\eta}|\,d\sigma(\eta)
\]
is subharmonic on \(\CC\) and finite at \(z=0\), where \(H(0)=0\). Since
\(H=-\infty\) on \(e^{iA}\), the set \(e^{iA}\), and hence \(A\) in an angular
chart, is polar. Every nonempty angular arc therefore contains a compact polar set carrying an
atomless probability whose logarithmic potential is \(-\infty\) throughout
its support.

For such a measure \(\sigma\), define
\[
 \Phi_\sigma(x)
 =
 \int u(R_{-\theta}x)\,d\sigma(\theta).
 \tag{4.8}\label{eq:Phi-average}
\]
Projective-unitary invariance and Fubini's theorem give
\[
 \int_{\CP^1}\int
 |u(R_{-\theta}x)|\,d\sigma(\theta)\,
 \omega_{\mathrm{FS}}(x)
 =
 \int_{\CP^1}|u|\,\omega_{\mathrm{FS}}<\infty.
\]
Hence \(\Phi_\sigma\in L^1\).  We also record that the pointwise average
is upper semicontinuous.  Indeed, if \(x_k\to x\), then upper semicontinuity
of \(u\) gives
\[
 \limsup_{k\to\infty}u(R_{-\theta}x_k)
 \leq u(R_{-\theta}x)
 \qquad\text{for every }\theta\in S^1.
\]
Since \(u\leq0\), Fatou's lemma applied to the nonnegative functions
\(-u(R_{-\theta}x_k)\) yields
\[
 \limsup_{k\to\infty}\Phi_\sigma(x_k)
 \leq \Phi_\sigma(x).
\]
Thus \(\Phi_\sigma\) is upper semicontinuous.

Distributional differentiation commutes with the integral. Averaging the
positive currents in \eqref{eq:packed-current} gives
\[
 \omega_{\mathrm{FS}}+\ddc\Phi_\sigma
 =
 \one_C\omega_{\mathrm{FS}}+\nu_\sigma,
 \tag{4.9}\label{eq:diffuse-current}
\]
The right-hand side is a positive current. Together with the upper semicontinuity
just proved, this shows that
\(\Phi_\sigma\in\PSH(\CP^1,\omega_{\mathrm{FS}})\).
Here
\[
 \nu_\sigma
 =
 \sum_{j\geq1}m_j
 \bigl(\theta\longmapsto R_\theta p_j\bigr)_*\sigma.
 \tag{4.10}\label{eq:diffuse-residual}
\]

\begin{rem}[Averaged Dirac measures]\label{rem:averaged-dirac}
For \(p\in\CP^1\), the pushforward measure in
\eqref{eq:diffuse-residual} may equivalently be written
\[
 \mu_p
 :=
 \bigl(\theta\longmapsto R_\theta p\bigr)_*\sigma
 =
 \int_{S^1}\delta_{R_\theta p}\,d\sigma(\theta),
\]
where the last integral is understood weakly.  Namely, for every bounded
continuous function \(f\colon\CP^1\to\RR\),
\[
 \int_{\CP^1} f\,d\mu_p
 =
 \int_{S^1} f(R_\theta p)\,d\sigma(\theta).
\]
The measure \(\mu_p\) is the barycenter of the Dirac masses
\(\delta_{R_\theta p}\).  In this notation,
\eqref{eq:diffuse-residual} simply reads
\[
 \nu_\sigma=\sum_{j\geq1}m_j\mu_{p_j}.
\]
\end{rem}
Every integrand in \eqref{eq:Phi-average} is zero on \(C\) and nonpositive
on \(\CP^1\). Hence
\[
 \Phi_\sigma=0\ \text{on }C,\qquad
 \Phi_\sigma\leq0\ \text{on }\CP^1,\qquad
 \sup_{\CP^1}\Phi_\sigma=0.
 \tag{4.11}\label{eq:diffuse-contact}
\]
Since \(p_{\mathrm{cap}}\) is fixed by every \(R_\theta\) and
\(u(p_{\mathrm{cap}})=0\), one also has
\(\Phi_\sigma(p_{\mathrm{cap}})=0\).  Since
\(p_{\mathrm{cap}}\notin C\), the literal zero-contact set of
\(\Phi_\sigma\) is strictly larger than \(C\); the equality obtained below is
only \(\omega_{\mathrm{FS}}\)-almost everywhere.

Let
\[
 E_\sigma
 =
 \bigcup_{j\geq1}\{R_\theta p_j:\theta\in A\}.
\]
In the affine coordinate used above, the rotations have the form
\(R_\theta(w)=e^{i\theta}w\). Since no \(p_j\) is fixed, its affine
coordinate is nonzero, and its orbit image of \(A\) is a nonzero complex
multiple of \(e^{iA}\). It is therefore polar. Hence the countable union
\(E_\sigma\) is polar, and \eqref{eq:diffuse-residual} shows that
\(\nu_\sigma\) is carried by \(E_\sigma\).

We claim that
\[
 E_\sigma\subset\{\Phi_\sigma=-\infty\}.
 \tag{4.12}\label{eq:E-polar-locus}
\]
Fix \(x=R_\theta p_j\), with \(\theta\in A\). Since all summands in
\eqref{eq:atomic-decomposition} are nonpositive, \(u\leq\varphi_j\).
In a holomorphic coordinate \(z\) centered at \(p_j\), the cap formula in
Proposition~\ref{prop:cap-models} gives
\[
 \varphi_j(y)=m_j\log|z(y)|+O(1).
\]
The orbit crosses \(p_j\) nondegenerately because the point is not fixed.
Compactness of the circle yields a constant \(K_j\) such that
\[
 \varphi_j(R_{-\eta}R_\theta p_j)
 \leq
 m_j\log|e^{i\theta}-e^{i\eta}|+K_j
\]
for every \(\eta\). Integrating against \(\sigma\), applying
\eqref{eq:polar-log-potential} for each \(n\), and then letting
\(n\to\infty\) gives \(\Phi_\sigma(R_\theta p_j)=-\infty\), proving
\eqref{eq:E-polar-locus}.

By \eqref{eq:E-polar-locus}, the measure \(\nu_\sigma\) is carried by the
minus-infinity locus and is therefore discarded by the non-pluripolar product.
Together with \eqref{eq:diffuse-current}, this gives
\[
 \left\langle\omega_{\mathrm{FS}}+\ddc\Phi_\sigma\right\rangle
 =
 \one_C\omega_{\mathrm{FS}}.
 \tag{4.13}\label{eq:diffuse-np}
\]
This measure has mass \(1-\rho>0\) and is supported on
\(C\subset\{\Phi_\sigma=0\}\). Since
\(\Phi_\sigma\leq P_{\omega_{\mathrm{FS}}}[\Phi_\sigma]\leq0\), it vanishes on
\(\{\Phi_\sigma<P_{\omega_{\mathrm{FS}}}[\Phi_\sigma]\}\). Hence
\cite[Corollary~3.4(iv)]{DNT21} gives
\[
 P_{\omega_{\mathrm{FS}}}[\Phi_\sigma]=\Phi_\sigma.
 \tag{4.14}\label{eq:diffuse-model}
\]
Therefore \(\Phi_\sigma\) is a normalized positive-mass model potential. Applying
\cite[Corollary~3.4(ii)]{DNT21} to \(\Phi_\sigma\) and comparing with
\eqref{eq:diffuse-np} yields
\[
 \one_{\{\Phi_\sigma=0\}}=\one_C
 \qquad \omega_{\mathrm{FS}}\text{-a.e.}
\]

The measure \(\nu_\sigma\) is atomless. For fixed \(x\) and \(j\), the
inverse image of \(\{x\}\) under
\(\theta\mapsto R_\theta p_j\) is empty or a singleton. Since \(\sigma\)
has no atoms, each summand of \eqref{eq:diffuse-residual} gives zero mass to
\(\{x\}\), and so does their countable sum. The smooth measure
\(\one_C\omega_{\mathrm{FS}}\) is also atomless. Equation
\eqref{eq:diffuse-current} therefore shows that the Lelong number of
\(\Phi_\sigma\) vanishes at every point.

Following Darvas--Xia, two potentials \(u,v\in\PSH(X,\theta)\) are
\(\cI\)-equivalent when
\[
 \cI(cu)=\cI(cv) \qquad\text{for every }c>0.
\]
The corresponding equivalence classes are called \(\cI\)-singularity types.
By \cite[Proposition~2.2(ii)]{DX24}, this is equivalent to equality of the
Lelong numbers of the pullbacks on every smooth modification: for every
\(\pi:Y\to X\) and every \(y\in Y\),
\[
 \nu(\pi^*u,y)=\nu(\pi^*v,y).
\] Every smooth modification of the smooth compact
curve \(\CP^1\) is an isomorphism. The vanishing just proved therefore gives
\[
 [\Phi_\sigma]_{\cI}=[0]_{\cI},\qquad
 \cI(c\Phi_\sigma)=\cI(0)=\OO_{\CP^1}
 \quad\text{for every }c>0.
 \tag{4.15}\label{eq:trivial-multiplier-ideals}
\]
The convention using \(|f|^2e^{-cu}\) in \cite{DX24} and the alternative
convention using \(|f|^2e^{-2cu}\) differ only by replacing \(c\) with \(2c\).
The assertion for all positive scales is unchanged.

To obtain two distinct averages, observe that the atomic measure \(\nu\) is not
invariant under the rotation circle. Otherwise, an atom of positive mass would
force every point in its infinite orbit to be an atom of the same positive
mass, contradicting finiteness of \(\nu\). No atom occurs at a fixed point by
the construction above.

There is therefore a real continuous function \(f\) on \(\CP^1\) for which
\[
 F(\theta)=\int_{\CP^1}f(R_\theta x)\,d\nu(x)
\]
is nonconstant. Choose open arcs \(J_0,J_1\) so small that the closed intervals
containing the respective ranges of \(F\) are disjoint. In each \(J_i\), apply
the polar construction above to obtain an atomless probability \(\sigma_i\).
Equations \eqref{eq:atomic-decomposition} and
\eqref{eq:diffuse-residual}, followed by Fubini's theorem, give
\[
 \int_{\CP^1}f\,d\nu_{\sigma_i}
 =
 \int F(\theta)\,d\sigma_i(\theta).
\]
The right-hand sides lie in disjoint intervals, so
\(\nu_{\sigma_0}\neq\nu_{\sigma_1}\). Equation
\eqref{eq:diffuse-current} then implies
\(\Phi_{\sigma_0}\neq\Phi_{\sigma_1}\).

Set
\[
 \phi=\Phi_{\sigma_0},\qquad \psi=\Phi_{\sigma_1}.
\]
Equations \eqref{eq:diffuse-np}, \eqref{eq:diffuse-model}, and
\eqref{eq:trivial-multiplier-ideals} show that the two potentials are
normalized model potentials with the same non-pluripolar measure
\(\one_C\omega_{\mathrm{FS}}\) of mass \(1-\rho>0\), and their contact-set
characteristic functions both agree with \(\one_C\)
\(\omega_{\mathrm{FS}}\)-almost everywhere. They also satisfy
\[
 \nu(\phi,x)=\nu(\psi,x)=0
 \qquad\text{for every }x\in\CP^1,
\]
together with
\[
 \cI(c\phi)=\OO_{\CP^1}=\cI(c\psi)
 \qquad(c>0).
\]
They are distinct because \(\nu_{\sigma_0}\neq\nu_{\sigma_1}\).
\end{proof}

\section{From contact filtrations to weak geodesic rays}
\label{sec:test-curves}

\subsection{Equal contact filtrations}

The ray problem requires literal contact data. Initial-tangent superlevel sets
are identified pointwise with test-curve contact sets, so these sets must agree
at every dual parameter.  The single-packing potential from
Lemma~\ref{lem:single-packing} may have additional zeroes on a null set, but
the perturbation below pushes those zeroes strictly below zero while changing
the residual atomic measure.  Thus the zero-contact filtration remains
unchanged, and no second mixing construction is needed.

\begin{prop}[Distinct maximal test curves with identical contact filtrations]\label{prop:maximal-test-curves}
Let \(\omega_{\mathrm{FS}}\) be normalized as in
Setup~\ref{setup:sphere}, and let \(b_\rho\) be the cap model of
Proposition~\ref{prop:cap-models}. There exist
\[
 0<\delta<s_0^2<T^2<1,\qquad \epsilon>0,
\]
and distinct maximal test curves
\((\psi_\tau)_{\tau\in\RR}\) and
\((\theta_\tau)_{\tau\in\RR}\) on
\((\CP^1,\omega_{\mathrm{FS}})\) whose zero-contact sets agree literally for
every \(\tau\). More precisely,
\[
 \psi_\tau=\theta_\tau=0\quad(\tau\leq0),\qquad
 \psi_\tau=b_{\tau^2}\quad(0<\tau\leq T),
\]
\[
 \psi_\tau=\theta_\tau=-\infty\quad(\tau>T),
\]
and
\[
 \theta_\tau=b_{\tau^2}\quad(0<\tau\leq s_0).
\]
For \(s_0<\tau\leq T\), the function \(\theta_\tau\) is a normalized
positive-mass model potential satisfying
\[
 \{\theta_\tau=0\}=\{b_{\tau^2}=0\}=C_{\tau^2}.
\]
For every \(x\in\CP^1\), both parameter functions
\(\tau\mapsto\psi_\tau(x)\) and
\(\tau\mapsto\theta_\tau(x)\) are upper semicontinuous, concave, and
decreasing. The two curves are distinct for every
\(\tau\in(s_0,T]\).
\end{prop}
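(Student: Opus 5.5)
The curve \((\psi_\tau)\) will be the cap curve itself, and the bulk of the work goes into a perturbation \((\theta_\tau)\) that changes the polar part of the curvature for \(\tau>s_0\) while leaving every zero-contact set unchanged.

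\emph{Step 1: the cap curve.} In the cap-area coordinate \(q\), the formula of Proposition~\ref{prop:cap-models} reads
\[
 b_\rho=-\tfrac12\,\mathrm{KL}\bigl(\mathrm{Ber}(q)\,\|\,\mathrm{Ber}(\rho)\bigr)
 \quad\text{for } q<\rho,
 \qquad b_\rho=0 \quad\text{for } q\geq\rho .
\]
Since relative entropy is convex in its second argument, \(\rho\mapsto b_\rho(x)\) is concave. It is also nonincreasing, because the relative entropy vanishes at \(\rho=q\) and increases for \(\rho>q\). A concave nonincreasing function composed with the convex map \(\tau\mapsto\tau^2\) is concave, so \(\tau\mapsto b_{\tau^2}(x)\) is concave and decreasing on \((0,T]\). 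It tends to \(0\) as \(\tau\downarrow0\) for \(q(x)>0\). At \(x=\infty\) it equals \(-\infty\) for every \(\tau>0\), which is still compatible with upper semicontinuity and concavity in the extended sense. Gluing with \(0\) for \(\tau\leq0\) and \(-\infty\) for \(\tau>T\) therefore preserves upper semicontinuity, concavity and monotonicity. Proposition~\ref{prop:cap-models} shows that every finite slice is a normalized model potential, so \((\psi_\tau)\) is a maximal test curve.

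\emph{Step 2: the perturbation.} Fix \(0<\delta<s_0^2<T^2<1\). Fix also a point \(p\in U_\delta\setminus\{\infty\}\) and a small closed disk \(D\ni p\) with \(D\subset U_\delta\setminus\{\infty\}\). On \(U_{s_0^2}\supset U_\delta\) one has \(b_{\tau^2}\leq b_{s_0^2}<0\) for all \(\tau\geq s_0\), so on \(D\) all these slices are bounded above by a fixed negative constant \(-c_D\). For \(\tau>s_0\) I would move a portion of mass \(g(\tau):=\epsilon(\tau-s_0)\) of the atom \(\tau^2\delta_\infty\) to a pole at \(p\). Concretely, take a two-pole ansatz
\[
 \theta_\tau=P_{\omega_{\mathrm{FS}}}\Bigl(\min\bigl\{0,\;b_{\tau^2}+g(\tau)\,h\bigr\}\Bigr),
 \qquad
 h=\log|w-p|-\tfrac12\log(1+|w|^2)+c,
\]
normalized so that \(\ddc h=\delta_p-\omega_{\mathrm{FS}}\) off \(\infty\) and \(h\leq0\) on \(\CP^1\). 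Because \(g(\tau)\) is convex and \(h\leq0\), the map \(\tau\mapsto b_{\tau^2}+g(\tau)h\) is concave. The operations \(\min\{0,\cdot\}\) and \(P_\omega\) are monotone and commute with the sup-of-concave structure; taking \(P_\omega\) of a family concave in \(\tau\) produces a family that is again concave in \(\tau\), by the standard Ross--Witt Nystr\"om argument. Hence concavity of \(\tau\mapsto\theta_\tau(x)\) is inherited. Maximality then follows exactly as in Lemma~\ref{lem:single-packing}, via \cite[Corollary~3.4(iv)]{DNT21}: the non-pluripolar measure is carried by the zero set.

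\emph{Step 3: contact sets and distinctness (main obstacle).} The hard point is to show \(\{\theta_\tau=0\}=C_{\tau^2}\) literally, for every \(\tau\in(s_0,T]\). One inclusion is easy: \(\theta_\tau\leq0\), and \(\theta_\tau\geq b_{\tau^2}+g(\tau)h\) up to a uniformly small error when \(\epsilon\) is small. The other inclusion requires \(\theta_\tau<0\) on all of \(U_{\tau^2}\), including the pole \(\infty\), where no extra zeros must appear. This is exactly where the single-packing construction failed, since there \(u(p_{\mathrm{cap}})=0\). The remedy I intend is to keep a residual atom at \(\infty\) of mass \(\tau^2-g(\tau)\geq\delta>0\), so that \(\infty\) stays in the polar locus. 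I would then show by a comparison/maximum-principle argument on \(U_{\tau^2}\) that the envelope stays strictly below zero there. The argument uses that \(\omega+\ddc\theta_\tau\) has no non-pluripolar mass in \(U_{\tau^2}\) and that \(\theta_\tau\) has negative boundary-free data; the constant \(\epsilon\) is then chosen small relative to \(c_D\) and \(\delta\). If the envelope formulation makes the concavity check awkward, my fallback is to write \(\theta_\tau\) explicitly as \(b_{\tau^2}\) outside a fixed annulus around \(D\), and inside it as the solution of the two-pole Dirichlet problem matching \(b_{\tau^2}\) to first order.

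Distinctness for \(\tau\in(s_0,T]\) is immediate once this is in place. The curvature of \(\theta_\tau\) has an atom of mass \(g(\tau)>0\) at \(p\), whereas by \eqref{eq:cap-current} the curvature of \(b_{\tau^2}\) has no atom at \(p\). Thus \(\theta_\tau\neq\psi_\tau\) for every such \(\tau\), while \(\{\theta_\tau=0\}=C_{\tau^2}=\{\psi_\tau=0\}\) for all \(\tau\).
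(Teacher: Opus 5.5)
Your Step~1 is correct and coincides with the paper's curve $(\psi_\tau)$. There is one harmless slip: the cap formula actually reads $b_\rho=-\tfrac12 D(\rho\Vert q)$, with $\rho$ in the first slot, but the convexity and monotonicity you use hold either way.

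\textbf{The gap is in Step~3, and it cannot be closed within your ansatz.}

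\emph{The contact set collapses.} Since $f=b_{\tau^2}+g(\tau)h$ is upper semicontinuous, $\theta_\tau=P_{\omega_{\mathrm{FS}}}(f)\le f$. So on $C_{\tau^2}$ you get $\theta_\tau\le g(\tau)h$. Your $h$ is the logarithm of the chordal distance to $p$ plus a constant, normalized to be $\le0$, so it vanishes at most at the antipode of $p$. Hence $\{\theta_\tau=0\}$ contains at most one point for every $\tau>s_0$, not $C_{\tau^2}$. The Di Nezza--Trapani argument for modelity and positive mass then has nothing to stand on.

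\emph{Your ``easy inclusion'' runs the wrong way.} The bound $\theta_\tau\ge f$ would need $f$ to be $\omega_{\mathrm{FS}}$-psh. But $\omega_{\mathrm{FS}}+\ddc f=-g(\tau)\,\omega_{\mathrm{FS}}$ on $U_{\tau^2}$ off the poles. Moreover, a bound ``up to a small error'' never yields a literal zero set. Also, $h$ is bounded at $\infty$, so no mass is actually removed from the atom at $\infty$.

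\emph{The obstruction is structural.} Suppose $\theta_\tau=b_{\tau^2}+v$ is $\omega_{\mathrm{FS}}$-psh with $\{\theta_\tau=0\}=C_{\tau^2}$. Then $v=0$ on $C_{\tau^2}$. Since $\omega_{\mathrm{FS}}+\ddc b_{\tau^2}$ has no mass on $\CC$ outside the interior of $C_{\tau^2}$, we get $\ddc v=\mu-a\,\delta_\infty$ with $\mu\ge0$ on $\CC$ and $\mu(\CC)=a\ge0$. If $a>0$, the circle means of $v$ over $\{|w|=r\}$ tend to $+\infty$. So $v\le0$ forces $a=0$, and then $v\equiv0$. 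Thus any slice lying below $b_{\tau^2}$ with the same contact set equals $b_{\tau^2}$. This rules out the sign trick ``$h\le0$, hence concave'' altogether.

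\emph{The fallback fails for the same reason.} On a region of $U_{\tau^2}\setminus\{\infty\}$ one has $\omega_{\mathrm{FS}}+\ddc b_{\tau^2}=0$. So a compactly supported modification there has $\ddc v\ge0$ with total mass zero, hence $v\equiv0$. If the region contains $\infty$, the two-pole choice $\mu=g\,\delta_p$ gives $v=g\log|w-p|+c$, which is nonconstant on $C_{\tau^2}$.

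\textbf{The missing idea} is that the redistributed mass must be a polar quadrature measure for the cap. That is, a positive measure carried by the $-\infty$-locus whose potential on $C_\delta$ coincides with that of $\delta\,\delta_\infty$. No finitely supported positive measure other than the central atom has this property: in a coordinate centered at $\infty$, all its positive-order moments would have to vanish. The Vitali packing does provide one.

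The paper applies Lemma~\ref{lem:single-packing} with $\rho=\delta$ to get $h$, and sets $d=h-b_\delta$. This $d$ vanishes identically on $C_\delta$, and $\ddc d=\nu-\delta\,\delta_\infty$ is purely atomic. Then $q_\tau=b_{\tau^2}+\epsilon(\tau-s_0)^2d$ has curvature $\one_{C_{\tau^2}}\omega_{\mathrm{FS}}+(\tau^2-\delta\lambda_\tau)\delta_\infty+\lambda_\tau\nu\ge0$. The bounds $d\le-b_\delta$ and $b_{\tau^2}\le b_\delta$ give $q_\tau\le(1-\lambda_\tau)b_{\tau^2}<0$ on $U_{\tau^2}$. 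So the contact set is literally $C_{\tau^2}$, and modelity follows from \cite[Corollary~3.4(iv)]{DNT21}.

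Consistently with the obstruction above, $d$ is positive somewhere, so concavity in $\tau$ is not automatic. The paper bounds $d_+\le\tfrac12 D(\delta\Vert a)$ by multiples of $\tau L_\tau(a)$ and of $L_\tau(a)+2/(1-\tau^2)$. It also takes $\lambda_\tau$ quadratic, so that the new branch joins $b_{\tau^2}$ at $s_0$ to first order. Distinctness is then read off at the atoms of $\nu$, where $\theta_\tau=-\infty<\psi_\tau$.
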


\begin{proof}
Choose
\[
0<\delta<s_0^2<T^2<1.
\]
By Proposition~\ref{prop:cap-models},
\[
 \omega_{\mathrm{FS}}+\ddc b_\rho
 =
 \one_{C_\rho}\omega_{\mathrm{FS}}+\rho\delta_\infty.
 \tag{5.1}\label{eq:cap-family-current}
\]
Apply Lemma~\ref{lem:single-packing} with parameter \(\rho=\delta\).
Write \(h\) for the resulting single-packing potential, \(K\) for its
zero-contact set, and \(\nu\) for its residual atomic measure. Then
\[
 h\leq0,
 \qquad
 h=0\quad\text{on }C_\delta,
 \qquad
 \{h=0\}=K\supset C_\delta,
\]
where \(K\setminus C_\delta\) has zero Fubini--Study area. Moreover,
\[
 \omega_{\mathrm{FS}}+\ddc h
 =\one_K\omega_{\mathrm{FS}}+\nu
 =\one_{C_\delta}\omega_{\mathrm{FS}}+\nu
 \quad\text{as currents},
 \tag{5.2}\label{eq:h-current}
\]
\(\nu\) is a positive atomic measure of total mass \(\delta\), every atom
of \(\nu\) is a pole of \(h\), and
\[
 h(\infty)=0,
 \qquad
 \nu(\{\infty\})=0.
\]
The possible additional zeroes in \(K\setminus C_\delta\) will disappear
in the perturbed slices.

Let
\[
 P:=\{h=-\infty\}\subset U_\delta,
\]
and let \(S\subset P\) be the countable set of atoms of \(\nu\). On
\(\CP^1\setminus\{\infty\}\), put
\[
 d=h-b_\delta.
\]
Then \(d=0\) on \(C_\delta\), while \(d=-\infty\) on \(P\). At every point
where \(d\) is finite, \(h\leq0\) gives
\[
 d\leq-b_\delta.
 \tag{5.3}\label{eq:d-bound}
\]
Write \(d_+=\max\{d,0\}\) on this finite locus.  The positive part \(d_+\)
is the only part of the perturbation that can oppose the decrease and
concavity of the cap family.

For \(s_0<\tau\leq T\), set
\[
 \lambda_\tau=\epsilon(\tau-s_0)^2,
 \qquad
 q_\tau=b_{\tau^2}+\lambda_\tau d
 \tag{5.4}\label{eq:q-definition}
\]
on \(\CP^1\setminus\{\infty\}\), and take the upper-semicontinuous extension
at infinity.  We shall choose \(\epsilon>0\) below.

\medskip
\noindent
\emph{Step 1: positivity of the slices.}
Subtracting \eqref{eq:cap-family-current} at \(\rho=\delta\) from
\eqref{eq:h-current} gives
\[
 \omega_{\mathrm{FS}}+\ddc q_\tau
 =
 \one_{C_{\tau^2}}\omega_{\mathrm{FS}}
 +(\tau^2-\delta\lambda_\tau)\delta_\infty
 +\lambda_\tau\nu.
 \tag{5.5}\label{eq:q-current}
\]
The final choice of \(\epsilon\) in \eqref{eq:epsilon-choice} will include
\[
 \epsilon(T-s_0)^2<1.
\]
Consequently \(0\leq\lambda_\tau<1\), and
\[
 \tau^2-\delta\lambda_\tau
 >\tau^2-\delta>0.
\]
Hence the current in \eqref{eq:q-current} is positive.  On the affine chart,
\(b_{\tau^2}\) and \(b_\delta\) are continuous and \(h\) is upper
semicontinuous, so the function in \eqref{eq:q-definition} is upper
semicontinuous there. With the chosen extension at infinity, it follows that
\[
 q_\tau\in\PSH(\CP^1,\omega_{\mathrm{FS}}).
\]

\medskip
\noindent
\emph{Step 2: contact set and modelity.}
Since \(\tau^2>\delta\),
\[
 C_{\tau^2}\subset C_\delta,
 \qquad
 U_\delta\subset U_{\tau^2}.
\]
On \(C_{\tau^2}\), both \(b_{\tau^2}\) and \(d\) vanish, hence
\(q_\tau=0\).

Let \(x\in U_{\tau^2}\) be a finite point at which \(d(x)>-\infty\).
Since \(h\leq0\), \eqref{eq:d-bound} gives \(d(x)\leq-b_\delta(x)\).  Since
the cap family decreases with its parameter and \(\tau^2>\delta\),
\(b_{\tau^2}(x)\leq b_\delta(x)\).  Therefore
\[
 \begin{aligned}
 q_\tau(x)
 &\leq b_{\tau^2}(x)-\lambda_\tau b_\delta(x)\\
 &\leq(1-\lambda_\tau)b_{\tau^2}(x)<0.
 \end{aligned}
\]
This estimate includes the possible extra zeros in
\((K\setminus C_\delta)\setminus\{\infty\}\).  On \(P\), one has
\(q_\tau=-\infty\).  At infinity, the logarithmic coefficient is
\(\tau^2-\delta\lambda_\tau>0\), so the upper-semicontinuous extension also
satisfies \(q_\tau(\infty)=-\infty\).  Hence
\[
 \{q_\tau=0\}=C_{\tau^2}
 \tag{5.6}\label{eq:q-contact}
\]
as a literal set.

The atoms of \eqref{eq:q-current} lie on the \(-\infty\)-locus: the atoms of
\(\nu\) are poles of \(h\), while the coefficient of \(\delta_\infty\) is
positive and \(q_\tau\) has a logarithmic pole at infinity. Consequently,
\[
 \left\langle
 \omega_{\mathrm{FS}}+\ddc q_\tau
 \right\rangle
 =
 \one_{C_{\tau^2}}\omega_{\mathrm{FS}}.
 \tag{5.7}\label{eq:q-np}
\]
This measure has mass \(1-\tau^2>0\) and is supported on \(\{q_\tau=0\}\).
Since \(q_\tau\leq P_{\omega_{\mathrm{FS}}}[q_\tau]\leq0\), it vanishes on
\(\{q_\tau<P_{\omega_{\mathrm{FS}}}[q_\tau]\}\). Therefore
\cite[Corollary~3.4(iv)]{DNT21} gives
\[
 P_{\omega_{\mathrm{FS}}}[q_\tau]=q_\tau.
 \tag{5.8}\label{eq:q-model}
\]
Thus every \(q_\tau\) is a normalized positive-mass model potential.

\medskip
\noindent
\emph{Step 3: decreasingness and concavity in \(\tau\).}
For a finite affine point \(w\), put
\[
 a=(1+|w|^2)^{-1}.
\]
When \(0<a<\rho<1\), the cap formula becomes
\[
 b_\rho(w)=-\frac12D(\rho\Vert a),
\]
where
\[
 D(r\Vert a)
 =
 r\log\frac ra+(1-r)\log\frac{1-r}{1-a}.
\]
For \(a<\tau^2\), set
\[
 L_\tau(a)
 =
 \log\left(\frac{\tau^2(1-a)}{(1-\tau^2)a}\right)>0.
\]
Differentiation gives
\[
 \partial_\tau b_{\tau^2}(w)=-\tau L_\tau(a),
 \qquad
 \partial_\tau^2 b_{\tau^2}(w)
 =
 -L_\tau(a)-\frac{2}{1-\tau^2}.
 \tag{5.9}\label{eq:cap-tau-derivatives}
\]
When \(\tau^2\leq a\), one has \(b_{\tau^2}(w)=0\). At
\(\tau=\sqrt a\), the active formula and its first derivative both vanish.
Hence \(\tau\mapsto b_{\tau^2}(w)\) is upper semicontinuous, decreasing, and
concave on \((0,T]\). At infinity it is identically \(-\infty\) there.

Only points with \(d(w)>0\) require an estimate. Such a point lies in
\(U_\delta\setminus P\), hence \(0<a<\delta\), and
\eqref{eq:d-bound} gives
\[
 0<d(w)\leq d_+(w)\leq-b_\delta(w)
 =\frac12D(\delta\Vert a).
 \tag{5.10}\label{eq:d-positive-bound}
\]
There is a constant \(M>0\) such that, for
\(0<a<\delta\) and \(s_0\leq\tau\leq T\),
\[
 D(\delta\Vert a)\leq M\tau L_\tau(a),
 \qquad
 D(\delta\Vert a)
 \leq M\left(L_\tau(a)+\frac{2}{1-\tau^2}\right).
\]
Indeed, the two corresponding ratios are continuous in the interior.  As
\(a\to\delta^-\), their numerators tend to zero while their denominators
remain positive because \(s_0^2>\delta\).  As \(a\to0^+\),
\[
 D(\delta\Vert a)=\delta\log(1/a)+O(1),
 \qquad
 L_\tau(a)=\log(1/a)+O(1),
\]
uniformly for \(\tau\in[s_0,T]\), so both ratios remain bounded.

Choose \(\epsilon>0\) such that
\[
 \epsilon(T-s_0)^2<1,
 \qquad
 \epsilon M(T-s_0)\leq1,
 \qquad
 \epsilon M\leq1.
 \tag{5.11}\label{eq:epsilon-choice}
\]
Fix a finite point \(w\in U_\delta\setminus P\).  Since
\(0<a<\delta<s_0^2\leq\tau^2\), the active cap formula applies throughout
\([s_0,T]\).  Equations \eqref{eq:q-definition} and
\eqref{eq:cap-tau-derivatives} give
\[
 \partial_\tau q_\tau
 =
 -\tau L_\tau(a)+2\epsilon(\tau-s_0)d,
\]
\[
 \partial_\tau^2q_\tau
 =
 -L_\tau(a)-\frac{2}{1-\tau^2}+2\epsilon d.
 \tag{5.12}\label{eq:q-tau-derivatives}
\]
Since \(d\leq d_+\),
\[
 \partial_\tau q_\tau
 \leq
 -\tau L_\tau(a)+2\epsilon(\tau-s_0)d_+,
\]
and
\[
 \partial_\tau^2q_\tau
 \leq
 -L_\tau(a)-\frac{2}{1-\tau^2}+2\epsilon d_+.
\]
The bound \eqref{eq:d-positive-bound} and the choice
\eqref{eq:epsilon-choice} imply
\[
 \begin{aligned}
 2\epsilon(\tau-s_0)d_+
 &\leq \epsilon(T-s_0)D(\delta\Vert a)\\
 &\leq \tau L_\tau(a),
 \end{aligned}
\]
and
\[
 2\epsilon d_+
 \leq
 \epsilon D(\delta\Vert a)
 \leq
 L_\tau(a)+\frac{2}{1-\tau^2}.
\]
Hence both derivatives in \eqref{eq:q-tau-derivatives} are nonpositive.
Outside \(U_\delta\), one has \(d=0\), and the conclusion follows directly
from the cap family.

\medskip
\noindent
\emph{Step 4: assembly of the test curves.}
At \(\tau=s_0\), both \(\lambda_\tau\) and \(\lambda_\tau'\) vanish. At
every finite point outside \(P\), the branch \(q_\tau\) therefore joins
\(b_{\tau^2}\) with matching value and first derivative. At a point of
\(P\), the parameter function equals the finite concave function
\(b_{\tau^2}\) through \(s_0\) and equals \(-\infty\) afterward; this
extended-real-valued function is decreasing, concave, and upper
semicontinuous. At infinity, the parameter function is zero for \(\tau\leq0\)
and \(-\infty\) for \(\tau>0\), with the same properties.

Define
\[
 \psi_\tau=
 \begin{cases}
  0,&\tau\leq0,\\
  b_{\tau^2},&0<\tau\leq T,\\
  -\infty,&\tau>T,
 \end{cases}
\]
and
\[
 \theta_\tau=
 \begin{cases}
  0,&\tau\leq0,\\
  b_{\tau^2},&0<\tau\leq s_0,\\
  q_\tau,&s_0<\tau\leq T,\\
  -\infty,&\tau>T.
 \end{cases}
\]
The preceding calculations give pointwise decreasingness, concavity, and upper
semicontinuity. At \(\tau=0\), every finite affine point has a zero plateau for
small positive \(\tau\), while infinity jumps from zero to \(-\infty\); both
behaviors are upper semicontinuous and extended-real concave. Extending by
\(-\infty\) after \(T\) preserves these properties because the finite
effective domain ends at \(T\).

Every finite slice is a normalized model potential: this follows from
Proposition~\ref{prop:cap-models} for the cap slices and from
\eqref{eq:q-model} for the perturbed slices. Hence both families are maximal
test curves by Definition~\ref{defn:maximal-test-curve}. Their contact sets
are
\[
 \CP^1\quad(\tau\leq0),
 \qquad
 C_{\tau^2}\quad(0<\tau\leq T),
 \qquad
 \varnothing\quad(\tau>T),
\]
so they agree literally for every \(\tau\).

Finally, \(\nu\) has total mass \(\delta>0\), so \(S\neq\varnothing\). If
\(p\in S\) and \(\tau\in(s_0,T]\), then
\[
 \psi_\tau(p)=b_{\tau^2}(p)>-\infty,
 \qquad
 \theta_\tau(p)=q_\tau(p)=-\infty,
\]
because \(p\neq\infty\). Therefore the two maximal test curves are distinct
at every such parameter.
\end{proof}

\subsection{Weak geodesic rays with equal initial tangent}\label{sec:rays}

Legendre duality carries the contact-set identity of
Proposition~\ref{prop:maximal-test-curves} to equality of the pointwise
initial tangents.

\begin{thm}[Distinct weak geodesic rays with the same initial tangent]
\label{thm:principal-c}
There exist distinct bounded weak geodesic rays on
\((\CP^1,\omega_{\mathrm{FS}})\),
\[
 (u_t)_{t\geq0},\qquad (v_t)_{t\geq0},
\]
such that \(u_0=v_0=0\).
Every finite-time slice \(u_t\) and \(v_t\) is bounded. Their complexifications
are \(\pi^*\omega_{\mathrm{FS}}\)-plurisubharmonic and solve the homogeneous
complex Monge--Amp\`ere equation. The pointwise right derivatives
\[
 \dot u_0(x)=\lim_{t\to0^+}\frac{u_t(x)}{t},
 \qquad
 \dot v_0(x)=\lim_{t\to0^+}\frac{v_t(x)}{t}
\]
exist and satisfy
\[
 \dot u_0(x)=\dot v_0(x)\qquad\text{for every }x\in\CP^1.
\]
\end{thm}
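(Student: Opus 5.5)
The plan is to take the two maximal test curves \((\psi_\tau)\) and \((\theta_\tau)\) from Proposition~\ref{prop:maximal-test-curves} and pass to their inverse Legendre transforms via Theorem~\ref{thm:darvas-correspondence}:
\[
 u_t=\Bigl(\sup_{\tau\in\RR}(\psi_\tau+t\tau)\Bigr)^*,\qquad
 v_t=\Bigl(\sup_{\tau\in\RR}(\theta_\tau+t\tau)\Bigr)^*,\qquad t>0,
\]
with \(u_0=v_0=0\). By the Legendre correspondence, both are bounded weak geodesic rays emanating from zero. Their complexifications are therefore \(\pi^*\omega_{\mathrm{FS}}\)-psh solutions of the homogeneous complex Monge--Amp\`ere equation, which is part of what ``bounded weak geodesic ray'' means in \cite{Darvas26,RWN14}. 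Boundedness of each slice can also be checked directly. Since \(\psi_\tau=\theta_\tau=0\) for \(\tau\le0\) and both are \(-\infty\) for \(\tau>T\), one gets \(0\le u_t,v_t\le tT\).

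\emph{Distinctness.} Since Theorem~\ref{thm:darvas-correspondence} is a bijection, it suffices that \(\psi\ne\theta\) as test curves. This holds because \(\psi_\tau\ne\theta_\tau\) for \(\tau\in(s_0,T]\), for instance at any atom \(p\in S\). One could also argue directly: if \(u_t=v_t\) for all \(t\), then \(\widehat u_\tau=\widehat v_\tau\). Maximality identifies these with \(\psi_\tau\) and \(\theta_\tau\), which is a contradiction.

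\emph{Existence and equality of tangents.} Each \(t\mapsto u_t(x)\) is convex with \(u_0(x)=0\), because it is a supremum of affine functions in \(t\). So \(u_t(x)/t\) is nondecreasing in \(t\). It is also bounded below by \(0\), since the \(\tau=0\) term of the supremum gives \(u_t\ge0\), which holds before and after the usc regularization. Hence the limit \(\dot u_0(x)\) exists in \([0,T]\), and likewise for \(v\). Proposition~\ref{prop:initial-contact} then gives, for every \(\tau\),
\[
 \{\dot u_0\ge\tau\}=\{\widehat u_\tau=0\}=\{\psi_\tau=0\},\qquad
 \{\dot v_0\ge\tau\}=\{\theta_\tau=0\},
\]
using \(\widehat u_\tau=\psi_\tau\) and \(\widehat v_\tau=\theta_\tau\). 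By Proposition~\ref{prop:maximal-test-curves} these superlevel sets coincide literally for every \(\tau\in\RR\). A real function is determined by its superlevel sets via \(f(x)=\sup\{\tau: x\in\{f\ge\tau\}\}\), so \(\dot u_0=\dot v_0\) pointwise.

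\emph{Main obstacle.} The delicate point is the interplay between the usc regularization in the Legendre transform and the pointwise identity \eqref{eq:prelim-tangent-contact}. It must hold at every point, including \(\infty\) and the atoms in \(S\), where the test curves jump to \(-\infty\). My plan is to rely on Proposition~\ref{prop:initial-contact} exactly as stated, which is pointwise. At \(\infty\) I would sanity-check it by hand. There \(\psi_\tau(\infty)=\theta_\tau(\infty)=-\infty\) for \(\tau>0\), so before regularization the supremum is \(0\), and one expects \(\dot u_0(\infty)=\dot v_0(\infty)=0\). Both sides of the set identity are then \(\CP^1\) for \(\tau\le0\), and neither contains \(\infty\) for \(\tau>0\), which is consistent. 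If I wanted a direct argument instead, I would use \(\dot u_0(x)=\sup\{\tau:\psi_\tau(x)=0\}\). This follows from concavity of \(\tau\mapsto\psi_\tau(x)\) together with \(\psi_\tau\le0\), and it makes the equality of tangents immediate from the equality of contact sets.
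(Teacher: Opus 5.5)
Your proposal is correct and follows essentially the same route as the paper: you take the inverse Legendre transforms of the two curves from Proposition~\ref{prop:maximal-test-curves}, use the bound $0\le u_t,v_t\le Tt$, apply the identity \eqref{eq:prelim-tangent-contact} with $\widehat u_\tau=\psi_\tau$ and $\widehat v_\tau=\theta_\tau$, pass from equality of all superlevel sets to $\dot u_0=\dot v_0$, and get distinctness from bijectivity of Theorem~\ref{thm:darvas-correspondence}. Your monotone-difference-quotient argument for the existence of $\dot u_0(x)$ fills in a point the paper leaves implicit; convexity in $t$ survives the usc regularization because $\limsup$ is sublinear.
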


\begin{proof}
Let \((\psi_\tau)\) and \((\theta_\tau)\) be the distinct maximal test curves
from Proposition~\ref{prop:maximal-test-curves}. They vanish for
\(\tau\leq0\), have normalized model slices for \(0<\tau\leq T\), and equal
\(-\infty\) for \(\tau>T\).

For \(t>0\), define their inverse Legendre transforms by
\[
 u_t(x)=\sup_{\tau\in\RR}\bigl(\psi_\tau(x)+t\tau\bigr),
 \qquad
 v_t(x)=\sup_{\tau\in\RR}\bigl(\theta_\tau(x)+t\tau\bigr),
 \tag{5.13}\label{eq:inverse-legendre}
\]
and take the canonical upper-semicontinuous representatives. By Theorem~\ref{thm:darvas-correspondence}, these inverse Legendre transforms
are bounded weak geodesic rays
with \(u_0=v_0=0\). Their complexifications are
\(\pi^*\omega_{\mathrm{FS}}\)-plurisubharmonic and solve the homogeneous
complex Monge--Amp\`ere equation.

Boundedness at every finite time also follows directly from
\eqref{eq:inverse-legendre}. The slice at \(\tau=0\) is zero, every finite
test-curve slice is nonpositive, and no finite slice occurs for \(\tau>T\).
Therefore
\[
 0\leq u_t\leq Tt,\qquad
 0\leq v_t\leq Tt
 \tag{5.14}\label{eq:ray-bounds}
\]
for every finite \(t\).

Proposition~\ref{prop:initial-contact} gives, for every
\(\tau\in\RR\),
\[
 \{\dot u_0\geq\tau\}=\{\widehat u_\tau=0\},
 \qquad
 \{\dot v_0\geq\tau\}=\{\widehat v_\tau=0\}.
 \tag{5.15}\label{eq:tangent-contact}
\] The bijection identifies
\(\widehat u_\tau=\psi_\tau\) and
\(\widehat v_\tau=\theta_\tau\). Proposition
\ref{prop:maximal-test-curves} gives
\[
 \{\psi_\tau=0\}=\{\theta_\tau=0\}
 \qquad\text{for every }\tau\in\RR.
\]
Combining this with \eqref{eq:tangent-contact} yields
\[
 \{\dot u_0\geq\tau\}=\{\dot v_0\geq\tau\}
 \qquad\text{for every }\tau\in\RR
\]
as literal subsets of \(\CP^1\). Equality of all real superlevel sets gives
\[
 \dot u_0(x)=\dot v_0(x)
 \qquad\text{for every }x\in\CP^1.
\]

The test curves are distinct by Proposition
\ref{prop:maximal-test-curves}.  Bijectivity of the Legendre correspondence
then implies that the rays are distinct.  Since they agree at \(t=0\),
\(u_t\neq v_t\) for at least one \(t>0\).
\end{proof}

\section*{Acknowledgments}
The author would like to thank his supervisors, Julien Keller and Hugues Auvray, for
their guidance, encouragement, and many helpful discussions.  This work was
supported by the FRQNT grant \emph{M\'etriques K\"ahl\'eriennes sp\'eciales
singuli\`eres et non-compactes} (DOI: 10.69777/343263).  A large language model
was used to assist with algebraic checks, selected explicit calculations, and
language editing.  The author independently checked all mathematical arguments
and conclusions.

\printbibliography

@article{BEGZ10,
  author       = {Boucksom, S{\'e}bastien and Eyssidieux, Philippe and Guedj, Vincent and Zeriahi, Ahmed},
  title        = {Monge--Amp\`ere equations in big cohomology classes},
  journaltitle = {Acta Math.},
  year         = {2010},
  volume       = {205},
  number       = {2},
  pages        = {199--262},
  doi          = {10.1007/s11511-010-0054-7},
  shorthand    = {BEGZ10},
}

@article{Chen00,
  author       = {Chen, Xiuxiong},
  title        = {The space of K\"ahler metrics},
  journaltitle = {J. Differential Geom.},
  year         = {2000},
  volume       = {56},
  number       = {2},
  pages        = {189--234},
  doi          = {10.4310/jdg/1090347643},
  shorthand    = {Chen00},
}

@article{DDL18,
  author       = {Darvas, Tam{\'a}s and Di Nezza, Eleonora and Lu, Chinh H.},
  title        = {Monotonicity of non-pluripolar products and complex Monge--Amp\`ere equations with prescribed singularity},
  journaltitle = {Anal. PDE},
  year         = {2018},
  volume       = {11},
  number       = {8},
  pages        = {2049--2087},
  doi          = {10.2140/apde.2018.11.2049},
  shorthand    = {DDL18},
}

@article{Darvas17,
  author       = {Darvas, Tam{\'a}s},
  title        = {Weak geodesic rays in the space of K\"ahler potentials and the class {$\mathcal{E}(X,\omega)$}},
  journaltitle = {J. Inst. Math. Jussieu},
  year         = {2017},
  volume       = {16},
  number       = {4},
  pages        = {837--858},
  doi          = {10.1017/S1474748015000316},
  shorthand    = {Darvas17},
}

@online{Darvas26,
  author       = {Darvas, Tam{\'a}s},
  title        = {A decade of metric geometry in the space of K\"ahler metrics},
  date         = {2026-04-21},
  eprint       = {2604.18981},
  eprinttype   = {arxiv},
  eprintclass  = {math.DG},
  doi          = {10.48550/arXiv.2604.18981},
  shorthand    = {Darvas26},
}

@article{DX24,
  author       = {Darvas, Tam{\'a}s and Xia, Mingchen},
  title        = {The volume of pseudoeffective line bundles and partial equilibrium},
  journaltitle = {Geom. Topol.},
  year         = {2024},
  volume       = {28},
  number       = {4},
  pages        = {1957--1993},
  doi          = {10.2140/gt.2024.28.1957},
  shorthand    = {DX24},
}

@article{DNT21,
  author       = {Di Nezza, Eleonora and Trapani, Stefano},
  title        = {Monge--Amp\`ere measures on contact sets},
  journaltitle = {Math. Res. Lett.},
  year         = {2021},
  volume       = {28},
  number       = {5},
  pages        = {1337--1352},
  doi          = {10.4310/MRL.2021.v28.n5.a3},
  shorthand    = {DNT21},
}

@incollection{Donaldson99,
  author       = {Donaldson, Simon K.},
  title        = {Symmetric spaces, K\"ahler geometry and Hamiltonian dynamics},
  booktitle    = {Northern California Symplectic Geometry Seminar},
  series       = {American Mathematical Society Translations, Series 2},
  volume       = {196},
  publisher    = {American Mathematical Society},
  location     = {Providence, RI},
  year         = {1999},
  pages        = {13--33},
  doi          = {10.1090/trans2/196/02},
  shorthand    = {Donaldson99},
}

@article{RWN14,
  author       = {Ross, Julius and Witt Nystr\"om, David},
  title        = {Analytic test configurations and geodesic rays},
  journaltitle = {J. Symplectic Geom.},
  year         = {2014},
  volume       = {12},
  number       = {1},
  pages        = {125--169},
  doi          = {10.4310/JSG.2014.v12.n1.a5},
  shorthand    = {RWN14},
}

@article{Semmes92,
  author       = {Semmes, Stephen},
  title        = {Complex Monge--Amp\`ere and symplectic manifolds},
  journaltitle = {Amer. J. Math.},
  year         = {1992},
  volume       = {114},
  number       = {3},
  pages        = {495--550},
  doi          = {10.2307/2374768},
  shorthand    = {Semmes92},
}

\end{document}